\theoremstyle{plain}
\newtheorem{theo}{Theorem}[section]
\newtheorem*{theo*}{Theorem}
\newtheorem{coro}[theo]{Corollary}
\newtheorem{prop}[theo]{Proposition}
\newtheorem{lemm}[theo]{Lemma}
\newtheorem{theomain}{Theorem}
\newtheorem{coromain}[theo]{Corollary}
\newtheorem{hypo}[theo]{Standing assumption}
\newtheorem*{nota}{Notation}
\theoremstyle{definition}
\newtheorem{rema}[theo]{Remark}
\newcommand*{\dd}%
  {\relax\ifnum\lastnodetype>0\mskip\medmuskip\fi\mathrm{d}}
\newcommand{\fspace}[1]{\mathscr{#1}}
\newcommand{\fX}{\fspace{X}}
\newcommand{\PM}{\operatorname{\mathcal{P}}}
\newcommand{\op}[1]{\mathrm{#1}}
\newcommand{\one}{\boldsymbol{1}}
\newcommand{\mchain}{\mathsf}
\newcommand{\pr}{\operatorname{\mathbb{P}}}
\newcommand{\esp}{\operatorname{\mathbb{E}}}
\newcommand{\Lip}{\operatorname{Lip}}
\newcommand{\BV}{\operatorname{BV}}
\newcommand{\Hol}{\operatorname{Hol}}
\newlength{\hypobox}
\newlength{\gapbox}
\newcounter{hypop}
\renewcommand{\thehypop}{\textup{(H\arabic{hypop}')}}
\newcounter{gap}
\renewcommand{\thegap}{\textup{(SG\arabic{gap})}}
\title{Effective limit theorems for \\ Markov chains with a spectral gap}
\author{Beno\^{\i}t R. Kloeckner \thanks{Universit\'e Paris-Est, Laboratoire d'Analyse et de Mat\'ematiques Appliqu\'ees (UMR 8050), UPEM, UPEC, CNRS, F-94010, Cr\'eteil, France}}
\begin{document}

\maketitle

\begin{abstract}
Applying quantitative perturbation theory for linear operators, we prove non-asymptotic bounds for Markov chains whose transition kernel has a spectral gap in an arbitrary Banach algebra of functions $\fspace{X}$. The main results are concentration inequalities and Berry-Esseen bounds, obtained assuming neither reversibility nor ``warm start'' hypothesis: the law of the first term of the chain can be arbitrary. The spectral gap hypothesis is basically a uniform $\fspace{X}$-ergodicity hypothesis, and when $\fspace{X}$ consist in regular functions this is weaker than uniform ergodicity.
We show on a few examples how the flexibility in the choice of function space can be used. The constants are completely explicit and reasonable enough to make the results usable in practice, notably in MCMC methods.
\end{abstract}

\section{Introduction}

\paragraph{General framework}
Let $(X_k)_{k\ge 0}$ be a Markov chain taking value in a general state space $\Omega$, and let $\varphi:\Omega \to \mathbb{R}$ be a function (the ``observable''). Under rather general assumptions, there is a unique stationary measure $\mu_0$ and it can be proved that almost surely\footnote{Here and in the sequel, we write indifferently $\mu(f)$ or $\int f\dd\mu$ for the integral of $f$ with respect to the measure $\mu$.}
\begin{equation}
\frac1n \sum_{k=1}^n \varphi(X_k) \to \mu_0(\varphi).
\label{eq:LLN}
\end{equation}
Then a natural question is to ask at what speed this convergence occurs. In many cases, one can prove a Central Limit Theorem, showing that the convergence has the order $1/\sqrt{n}$. But this is again an asymptotic result, and one is led to ask for non-asymptotic bounds, both for the Law of Large Numbers \eqref{eq:LLN} (``concentration inequalities'') and for the CLT (``Berry-Esseen bounds'').

\paragraph{A word on effectivity}
In this paper, the emphasis will be on \emph{effective} bounds, i.e. given an explicit sample size $n$, one should be able to deduce from the bound that the quantity being considered lies in some explicit interval around its limit with at least some explicit probability. In other words, the result should be \emph{non-aymptotic} and all constants should be made explicit. The motivations for this are at least twofold. 

First, in practical applications of the Markov chain Monte-Carlo (MCMC) method, where one uses \eqref{eq:LLN} to estimate the integral $\mu_0(\varphi)$, effective results are needed to obtain proven convergence of a given precision. MCMC methods are important when the measure of interest is either unknown, or difficult to sample independently (e.g. uniform in a convex set in large dimension), but happens to be the stationary measure for an easily simulated Markov chain. The Metropolis-Hastings algorithm for example makes it possible to deal with an absolutely continuous measure whose density is only known up to the normalization constant.

A second, more theoretical motivation is that the constants appearing in limit theorem depend on a number of parameters (e.g. the mixing speed of the Markov chain, the law of $X_0$, etc.). When the constants are not made explicit, one may not be able to deduce from the result how the convergence speed changes when some parameter approaches the limit of the domain where the result is valid (e.g. when the spectral gap tends to $0$).

There are many works proving concentration inequalities and (to a lesser extent) Berry-Esseen bounds for Markov chains, under a variety of assumptions, and we will only mention a small number of them. To explain the purpose of this article, let us discuss briefly three directions. 

\paragraph{Previous works (1): total variation convergence}
The first direction is mainly motivated by MCMC; we refer to \cite{Roberts-Rosenthal} for a detailed introduction to the topic. 

The Markov chains being considered are usually ergodic (either \emph{uniformly}, which corresponds to a spectral gap on $L^\infty$, or \emph{geometrically}); one measures difference between probability measure using the \emph{total variation distance}, and the limit theorems are typically obtained for $L^\infty$ observables $\varphi$ (the emphasis here is not on the boundedness, but on the lack of regularity assumption). Effective concentration inequalities have been obtained in this setting, for example in \cite{Glynn2002} and \cite{Kontoyiannis-LMM} which we shall discuss below. Watanabe and Hayashi \cite{watanabe2017finite} have given bounds for tail probability and applied this to hypothesis testing, but their method is restricted to finite-state spaces. Berry-Esseen bounds have been proved in \cite{Bolthausen}, but effective results are less common. 

\paragraph{Previous works (2): the spectral method}
The second direction grew from the ``Nagaev method'' \cite{Nagaev1,Nagaev2}, a functional approach where perturbative spectral theory enables one to adapt the classical Fourier proofs of limit theorems, from independent identically distributed random variable to suitable Markov chains. This approach is described in \cite{HH} in a quite general setting, and is especially popular in dynamical systems (the statistical properties of certain dynamical systems can be studied more easily by reversing time, and considering a Markov chain jumping randomly along \emph{backward} orbits).  

There, the Markov chain being considered are often \emph{not} ergodic in the total variation sense, but instead their transition kernel has a spectral gap in a space $\fspace{X}$ made of regular (e.g. Lipschitz or H\"older) functions; one sometimes say such a Markov chain is $\fspace{X}$-ergodic. The limit theorems are then restricted to observables $\varphi\in\fspace{X}$, and the speed of convergence is driven by the regularity of $\varphi$ as much as by its magnitude. Due to the use of perturbation theory of operator, in most cases this method has not yielded \emph{effective} results. 

Note that the spectral method \emph{can} be applied without regularity assumptions, taking e.g. $\fspace{X}=L^2(\mu_0)$ or $\fspace{X}=L^\infty(\Omega)$ (or variants, see \cite{Kontoyiannis-M}), thus the present direction intersects the previous one.

There are a few exceptions to the aforementioned lack of effectiveness. When $\fspace{X}$ is a Hilbert space, by symetrization of the transition kernel one can use well-known effective perturbation results. In this way, Lezaud obtains effective concentration inequalities and Berry-Esseen bounds \cite{Lezaud,Lezaud2001}, see also \cite{Paulin2015}. Both work in $L^2(\mu_0)$, restricting accordingly the Markov chains that can be considered. Second Dubois \cite{Dubois} gave what seems to be the first effective Berry-Esseen inequality in a dynamical context, and we shall compare the present Berry-Esseen inequality with his. Last, Liverani \cite{liverani2001rigorous} made very explicit the perturbation result obtained with Keller \cite{Keller-Liverani} for operators in ``strong-to-weak'' norms, which might be usable to obtain concentration results.

\paragraph{Previous works (3): Lipschitz observables}
The third direction is quite recent: Joulin and Ollivier \cite{JO} used ideas from optimal transportation to prove very efficiently effective concentration results under a \emph{positive curvature} hypothesis; this corresponds to strict contraction on the space $\fspace{X}=\Lip$ of Lipschitz functions.  Paulin \cite{Paulin2016} extended this method to the slightly more general case of a spectral gap (on the same space). In a similar context but with different methods, Dedeker and Fan \cite{dedeker2015deviation} proved concentration near the expectation for non-linear, separately Lipschitz functionals.

This method is very appealing, but is restricted to a single, pretty restrictive function space constraining both the Markov chains and the observables that can be considered; we will see in examples below that being able to change the function space can be useful to get good constants even when \cite{JO} can be applied. Moreover, this method seems unable to provide higher-order limit theorem such as the CLT or Berry-Esseen bounds.

\paragraph{Contributions of this work}
The goal of this article is to combine recent \emph{effective} perturbation results \cite{K:perturbation} with the Nagaev method to obtain effective concentration inequalities and Berry-Esseen bounds for a wealth of Markov chains. Our main hypothesis will basically be a spectral gap on some function space $\fspace{X}$, with the restriction that we need $\fspace{X}$ to be a Banach \emph{algebra} (this will in particular restrict us to bounded observables).
We obtain three main results: 
\begin{itemize}
\item a general concentration inequality (Theorem \ref{theo:main-conc}),
\item a variant which, under a bound on the \emph{dynamical variance} of $(\varphi(X_k))_k$, gives an optimal rate for small enough deviations (Theorem \ref{theo:main-second}),
\item a general Berry-Esseen bound (Theorem \ref{theo:main-BE}).
\end{itemize}

Let us give a few examples where our results apply:
\begin{itemize}
\item taking $\fspace{X}=L^\infty(\Omega)$, our assumptions essentially reduce to uniform ergodicity of the Markov chain and boundedness of the observable,
\item taking $\fspace{X}=\Lip(\Omega)$, our assumptions essentially reduce to positively curved Markov chains (in the sense of Ollivier) and bounded Lipschitz observables. This for example applies to contracting Iterated Function Systems and backward random walks of expanding maps. We shall see (Section \ref{sec:cube}) that in the toy case of the discrete hypercube and observables with small Lipschitz constant, Theorem \ref{theo:main-conc} is less powerful than \cite{JO} but that for larger Lipschitz constants, Theorem \ref{theo:main-second} can improve on \cite{JO},
\item when $\Omega$ is a graph, we propose a functional space of functions with small ``local total variations'' that yields improvement over \cite{JO} in some cases (Section \ref{sec:cube}),
\item taking $\fspace{X}=\BV(I)$ where $I$ is an interval, our results apply to a natural Markov chains related to \emph{Bernoulli convolutions}, allowing observables of bounded variation such as characteristic functions of intervals (Section \ref{sec:Bernoulli}),
\item more generally, when $\Omega$ is a domain of $\mathbb{R}^d$ some natural Markov chains are $\BV(\Omega)$-ergodic and our results apply to functions of bounded variation, e.g. characteristic functions of sets of finite perimeter -- but we will not consider this case here, since it needs a somewhat sophisticated setup,
\item Another direction we do not explore here is to take $\fspace{X}=\Hol_\alpha(\Omega)$, the space of $\alpha$-H\"older functions, or in case $\Omega=I$ is an interval, $\fspace{X}=\BV_p(I)$, the space of $p$-bounded variation functions. These enable one to consider more general functions than $\Lip(\Omega)$ or respectively $\BV(\Omega)$; even for Lipschitz of BV functions, using these spaces can be useful because they tend to give regular observables a much lower norm.
\end{itemize}

To my knowledge, no effective result was known in the setting of bounded variation functions (and while the usual spectral method could have been used in this case, I do not know of previous asymptotic results either) and the effective Berry-Esseen bound seems new in most of the above cases.

\paragraph{Structure of the article} In Section \ref{sec:results} we state notation and the main results. Section \ref{sec:examples} explains briefly the aforementioned examples and compares our results with previous ones; detailed proofs are available in a companion note \cite{K:examples}. In Section \ref{sec:perturbation} we recall how perturbation theory can be used to prove limit theorems, and state the perturbation results we need to carry out this method in a effective manner.
In Section \ref{sec:main} we prove the core estimates to be used thereafter, while Section \ref{sec:concentration} carries out the proof of the concentration inequalities. Section \ref{sec:BE} is devoted to the proof of the Berry-Esseen inequality.

\section{Assumptions and main results}
\label{sec:results}

Let $\Omega$ be a Polish metric space endowed with its Borel $\sigma$-algebra and denote by $\PM(\Omega)$ the set of probability measures on $\Omega$.
We consider a transition kernel $\mchain{M}=(m_x)_{x\in\Omega}$ on $\Omega$, i.e. $m_x\in \PM(\Omega)$ for each $x\in\Omega$, and a Markov chain $(X_k)_{k\ge 0}$ following the kernel $\mchain{M}$, i.e. $\pr(X_{k+1}\mid X_k=x)=m_x$. We will only consider cases where there exist a unique stationary measure (see Remark \ref{rema:stationary} below), but we do not ask the Markov chain to be stationary: the law of $X_0$ is arbitrary (``cold start''). In some cases of interest, the law of each $X_k$ will even be singular with respect to the stationary measure.

\begin{nota}
In the following, $\mu_0$ will always denote the stationary measure of $\mchain{M}$, and $\mu$ shall denote the law of $X_0$ (which is arbitrary).
\end{nota}

We shall study the behavior of $(X_k)_{k\ge0}$ by comparing the empirical mean to the stationary mean:
\[ \hat\mu_n(\varphi) := \frac1n \sum_{k=1}^n \varphi(X_k) \quad\text{vs.}\quad  \mu_0(\varphi)\]
for an arbitrary ``observable'' $\varphi\in \fspace{X}$, where $\fspace{X}$ is a space of functions $\Omega\to \mathbb{R}$ (or $\Omega\to \mathbb{C}$). Our method seems not (directly) suitable to consider more general, non-linear functionals $\Phi(X_1,\dots,X_n)$: we decompose $\hat\mu_n(\varphi)$ to make a power of a perturbed transfer operator appear (see Section \ref{sec:perturbation}).

\subsection{Assumptions}

\begin{hypo}\label{hypo:X}
In all the paper, we assume $\fspace{X}$ satisfies the following:
\begin{enumerate}
\item its norm  $\lVert\cdot \rVert$ dominates the uniform norm: $\lVert\cdot \rVert\ge \lVert\cdot \rVert_\infty$,
\item $\fspace{X}$ is a Banach algebra, i.e. for all $f,g\in \fspace{X}$ we have $\lVert fg\rVert \le \lVert f\rVert \lVert g \rVert$,
\item $\fspace{X}$ contains the constant functions and
$\lVert\one\rVert= 1$ (where $\one$ denotes the constant function with value $1$).
\end{enumerate}
\end{hypo}
The first hypothesis ensures integrability with respect to arbitrary probability measure, which is important for cold-start Markov chains; it also implies that every probability measure can be seen as a continuous linear form acting on $\fspace{X}$.
The second hypothesis will prove very important in our method where products abound (and can be replaced by the more lenient $\lVert fg\rVert \le C\lVert f\rVert \lVert g \rVert$ up to multiplying the norm by a constant), and the hypothesis on $\lVert\one\rVert$ is a mere matter of convenience and could be removed at the cost of more complicated formulas.

\begin{rema}
This setting may seem restrictive at first: the Banach algebra hypothesis notably excludes $L^p$ spaces, while classically one only makes moment assumptions on the observable. This is quite unavoidable given that we will work with more than one equivalence class of measures, and we want to allow cold start at a given position ($X_0\sim \delta_{x_0}$). The measures $m_x$ may be singular with respect to the stationary measure $\mu_0$, and as a matter of fact in the dynamical applications $m_x$ will be purely atomic while $\mu_0$ will often be atomless. It may thus happen that for $\varphi$ an $L^p(\mu_0)$ observable, $\varphi(X_j)$ is undefined with positive probability, or is extremely large even if $\varphi$ has small moments with respect to $\mu_0$.
\end{rema}

To the transition kernel $\mchain{M}$ is associated an averaging operator acting on $\fspace{X}$:
\[\op{L}_0 f(x) = \int_\Omega f(y) \dd m_{x}(y).\]
Since each $m_x$ is a probability measure, $\op{L}_0$ has $1$ as eigenvalue, with eigenfunction $\one$. 

\begin{hypo}\label{hypo:L}
In all the article we assume $\mchain{M}$ satisfies the following:
\begin{enumerate}
\item $\op{L}_0$ acts as a bounded operator from $\fspace{X}$ to itself, and its operator norm $\lVert \op{L}_0\rVert$ is equal to $1$.
\item $\op{L}_0$ is contracting with gap $\delta_0>0$, i.e. there is a closed hyperplane $G_0 \subset \fspace{X}$ such that
\[ \lVert \op{L}_0 f \rVert \le (1-\delta_0) \lVert f\rVert \qquad \forall f\in G_0.\]
\end{enumerate}
\end{hypo}
The first hypothesis could be relaxed, considering operators of arbitrary norm, at the cost of more complicated formulas.

\begin{rema}\label{rema:gap}
The second hypothesis is the main one, and implies in particular that $1$ is a simple isolated eigenvalue. It is a slightly stronger assumption than a spectral gap, which can be written as
\[ \lVert \op{L}_0^n f \rVert \le C(1-\delta_0)^n \lVert f\rVert \qquad \forall f\in G_0\]
for all $n\in\mathbb{N}$ and some $C\ge 1$ (what we call here a contraction with gap $\delta_0$ can thus also be called a spectral gap of size $\delta_0$ with constant $1$). When $\op{L}_0$ only has a spectral gap, all our results still apply to the Markov chains $Y_m = X_{n_0+mk}$ where $n_0$ is arbitrary and $k$ is such that $C(1-\delta_0)^k < 1$. This trick can be also used when $C=1$, in cases where the gap is small; in numerical computations, this can be especially useful when the simulation of the random walk is much cheaper than the evaluation of the observable.
\end{rema}

\begin{rema}\label{rema:stationary}
The contraction hypothesis (or a mere spectral gap) ensures that up to scalar factors there is a unique continuous linear form $\phi_0$ acting on $\fspace{X}$ such that  $\phi_0 \circ \op{L}_0=\phi_0$; since any stationary measure of $\mchain{M}$ satisfy this, all stationary measures coincide on $\fspace{X}$. They might not be unique (e.g. if $\fspace{X}$ contains only constants), but since we consider the $\varphi(X_k)$ with $\varphi\in\fspace{X}$, this will not matter. We will thus denote an arbitrary stationary measure by $\mu_0$, and identify it with $\phi_0$ (observe that $G_0$ is then equal to $\ker\mu_0$). In most cases, $\fspace{X}$ will be dense in the space of continuous function endowed with the uniform norm, ensuring that two measures coinciding on $\fspace{X}$ are equal, and then the contraction hypothesis ensures the uniqueness of the stationary measure.
\end{rema}

\begin{rema}\label{rema:assumptions}
There are numerous examples where assumptions \ref{hypo:X} and \ref{hypo:L} are satisfied; we will present a few of them in Section \ref{sec:examples}. Typically, $\fspace{X}$ has a norm of the form $\lVert\cdot\rVert = \lVert \cdot\rVert_\infty+V(\cdot)$ where $V$ is a seminorm measuring the regularity in some sense (e.g. Lipschitz constant, $\alpha$-H\"older constant, total variation, total $p$-variation...) and satisfying $V(fg)\le \lVert f\rVert_\infty V(g)+ V(f)\lVert g \rVert_\infty$. This inequality ensures that $\fspace{X}$ is a Banach Algebra, and $\lVert \one\rVert=1$ holds as soon as $V(\one)=0$. Since averaging operators necessarily satisfy $\lVert \op{L}_0 f\rVert_\infty\le \lVert f\rVert_\infty$, it is sufficient that $\op{L}$ contracts $V$ (i.e. $V(\op{L}_0 f)\le \theta V(f)$ for some $\theta\in(0,1)$ and all $f\in\fspace{X}$) to ensure that $\lVert\op{L}_0\rVert =1$. It can be proved that in many cases, the contraction of $V$ also implies the contraction of $\lVert\cdot\rVert$ in the sense of assumption \ref{hypo:L} (see Lemma 2.3 of \cite{K:HT}, and a more general version in \cite{K:examples}). In fact, all examples considered here are of this kind, but it seemed better to state our main results in terms of the hypotheses we use directly in the proof. This is done at the expense of some sharpness: indeed we could in some cases improve our constants by estimating with more precision $\lVert\pi_0\rVert$ below (see Lemma 2.4 of \cite{K:HT}).
\end{rema}

\subsection{Concentration inequalities}

Our first result is a concentration inequality, featuring a dichotomy between a Gaussian regime and an exponential regime (note that we consider concentration near $\mu_0(\varphi)$: in many cases there is a purely Gaussian concentration near $\esp[\hat\mu_n(\varphi)]$, and the exponential regime appears due to the bias $\mu_0(\varphi) - \esp[\hat\mu_n(\varphi)]$).

\begin{theomain}\label{theo:main-conc}
For all $n\ge 1+\frac{\log 100}{-\log(1-\delta_0/13)}$ it holds:
\[ \pr_\mu\Big[\lvert\hat\mu_n(\varphi)-\mu_0(\varphi)\rvert\ge a\Big]    \le \begin{cases}\displaystyle
2.488 \exp\Big(-n  \frac{\delta_0}{13.44\delta_0+8.324}  \frac{a^2}{\lVert\varphi\rVert^2}\Big)
 & \\
 \hfill \displaystyle \mbox{if }\frac{a}{\lVert \varphi\rVert} \le \frac{\delta_0}{3} \\[6\jot]
 \displaystyle
2.624 \exp\Big( -n\frac{0.98 \delta_0^2}{12+13\delta_0} \Big(\frac{a}{\lVert\varphi\rVert}-0.254\delta_0\Big) \Big) &\\
 \hfill \mbox{otherwise.}
\end{cases}\]
\end{theomain}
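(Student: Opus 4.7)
The strategy is the classical Nagaev (spectral) method, made quantitative by the effective perturbation estimates announced in Section \ref{sec:perturbation}. First I would reduce to a centered observable $\tilde\varphi = \varphi - \mu_0(\varphi)\one$, which still lies in $\fspace{X}$ with norm at most $2\lVert\varphi\rVert$ (and in fact $\tilde\varphi\in G_0=\ker\mu_0$). Then for any $t>0$, Markov's inequality gives
\[
\pr_\mu\bigl[\hat\mu_n(\varphi)-\mu_0(\varphi)\ge a\bigr]
 \le e^{-nta}\,\esp_\mu\!\left[e^{t\sum_{k=1}^n\tilde\varphi(X_k)}\right],
\]
and the same estimate for the lower tail with $-t$. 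The key algebraic identity, exploiting that $\fspace{X}$ is a Banach algebra (so $e^{t\tilde\varphi}\in\fspace{X}$), is that
$\esp_\mu\bigl[e^{t\sum_{k=1}^n\tilde\varphi(X_k)}\bigr]=\mu\bigl(\op{L}_t^n\one\bigr)$,
where $\op{L}_t f(x)=\int e^{t\tilde\varphi(y)}f(y)\,\dd m_x(y)$ is the \emph{perturbed transfer operator}. Crucially, $\mu$ acts continuously on $\fspace{X}$ because $\lVert\cdot\rVert\ge\lVert\cdot\rVert_\infty$.

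Next I would invoke the effective perturbation theorem (this is where the results of \cite{K:perturbation} and the ``core estimates'' of Section \ref{sec:main} come in) to decompose
\[
\op{L}_t^n = \lambda_t^n\,\pi_t + N_t^n,
\]
on a neighbourhood $\lvert t\rvert\lVert\varphi\rVert\le t_\star\delta_0$, where $\lambda_t$ is the perturbed leading eigenvalue with $\lambda_0=1$, $\pi_t$ is the rank-one spectral projector (with $\pi_0 f=\mu_0(f)\one$), and $N_t$ has explicit spectral radius bounded away from $\lvert\lambda_t\rvert$. Since $\tilde\varphi$ is $\mu_0$-centered, the derivative $\lambda_t'|_{t=0}=\mu_0(\tilde\varphi)=0$, so a quadratic Taylor bound of the form $\lvert\lambda_t\rvert\le \exp\!\bigl(c_1 t^2\lVert\varphi\rVert^2/\delta_0\bigr)$ is expected, with explicit $c_1$. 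Similarly, effective bounds on $\lVert\pi_t\rVert$, $\lVert N_t\rVert$ and on the bias $\lvert\mu(\pi_t\one)-1\rvert$ must be produced; this last is where the cold-start hypothesis forces one to control $\pi_t$ in operator norm (rather than on stationary functions only).

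Combining, one obtains an inequality of the shape
\[
\esp_\mu\!\left[e^{t\sum_{k=1}^n\tilde\varphi(X_k)}\right]
\le C_1\exp\!\Bigl(n c_1 t^2\lVert\varphi\rVert^2/\delta_0\Bigr)+C_2\rho^n,
\]
valid for $\lvert t\rvert\le t_\star\delta_0/\lVert\varphi\rVert$, where $\rho<1$ is explicit and $n$ is large enough that the $\rho^n$ tail is absorbed (this is the source of the condition $n\ge 1+\log 100/(-\log(1-\delta_0/13))$). Plugging into Markov and optimising $t$ yields the dichotomy: for small $a/\lVert\varphi\rVert$ the unconstrained optimum $t\approx a\delta_0/(2c_1\lVert\varphi\rVert^2)$ lies inside the admissibility window, giving a Gaussian bound $\exp(-n\,\delta_0 a^2/(\text{const}\,\lVert\varphi\rVert^2))$; for larger $a$ one saturates $t=t_\star\delta_0/\lVert\varphi\rVert$, producing an exponential-in-$a$ bound with a $-0.254\delta_0$-type offset coming from the quadratic remainder evaluated at the boundary.

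\textbf{Main obstacle.} The routine part is the Chernoff-type optimisation; the real difficulty is producing quantitative perturbation estimates (on $\lambda_t$, $\pi_t$, and the remainder $N_t$) with constants sharp enough to yield the small numerical coefficients in the statement, all while insisting on the cold-start regime where $\mu$ is arbitrary and $\pi_t$ must be controlled in full operator norm on $\fspace{X}$. Tracking the dependence on $\delta_0$ through the Taylor expansion, and choosing the threshold $\delta_0/3$ so that the Gaussian optimum $t^*$ just fits inside the perturbative window, is the delicate bookkeeping step.
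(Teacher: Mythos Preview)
Your proposal is correct and follows essentially the same route as the paper (Section \ref{sec:concentration}): express the moment generating function via \eqref{eq:mgf}, use the decomposition $\op{L}^n=\lambda^n\op{P}+\op{R}^n$ together with the effective estimates of Section \ref{sec:main}, then apply Chernoff and optimize in $t$, the dichotomy arising exactly as you say from whether the unconstrained quadratic optimum lies inside the perturbative window. The only notable difference is that the paper does \emph{not} center $\varphi$ beforehand; it works with $\varphi$ directly and cancels $e^{t\mu_0(\varphi)}$ afterward using the expansion $\lambda_{\frac{t}{n}\varphi}^n=\exp\bigl(t\mu_0(\varphi)+O(\tfrac{t^2}{n}\lVert\varphi\rVert^2)\bigr)$ from Corollary \ref{coro:L}, thereby avoiding your factor-of-two loss $\lVert\tilde\varphi\rVert\le 2\lVert\varphi\rVert$ --- this is what makes the stated numerical constants attainable.
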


See Section \ref{sec:examples} and \cite{K:examples} for a few sample cases where this result applies and comparisons with previous results. Let us stress right away that the main strength of the present result is its broadness: we need no warm-start hypothesis, no reversibility, and we can apply it in many functional spaces. In particular, this makes our results broader than those of \cite{Lezaud,Lezaud2001} which assume ergodicity. Lezaud also gets a front constant proportional to the $L^2(\mu_0)$-norm of the density of the distribution of $X_0$ with respect to the stationary distribution, which would be infinite in many of our cases of applicability;  even in the case of a finite state space he then gets a large front constant when $X_0\sim \delta_x$. The approach of Joulin and Ollivier enabled them to get rid of this constant in some test cases, and we compare our results to theirs in Section \ref{sec:cube}.

The spectral method gives us access to higher-order estimates, enabling us to improve the Gaussian regime bound as soon as we have a good control over the ``dynamical variance'' (also called ``asymptotic variance'') $\sigma^2(\varphi)$, which is the variance appearing in the CLT for $(\varphi(X_k))_{k\ge 0}$; setting $\bar\varphi = \varphi-\mu_0(\varphi)$, the dynamical variance is defined by:
\[\sigma^2(\varphi)=\mu_0(\varphi^2)-(\mu_0\varphi)^2+2\sum_{k\ge 1} \mu_0(\varphi \op{L}_0^k \bar\varphi).\]

\begin{theomain}\label{theo:main-second}
Whenever $n\ge 1+\frac{\log 100}{-\log(1-\delta_0/13)}$, $U \ge \sigma^2(\varphi)$ and 
$a \le \frac{U}{\lVert\varphi\rVert} \log\Big(1+\frac{\delta_0^2}{12+13\delta_0}\Big)$,
\[\pr_\mu\big[\lvert \hat\mu_n(\varphi)-\mu_0(\varphi)\rvert\ge a\big]
  \le 2.637 \exp\left(-n\cdot\Big(\frac{a^2}{2U} - 10(1+\delta_0^{-1})^2 \frac{\lVert\varphi\rVert^3a^3}{U^3}\Big)\right).\]
Given an upper bound $S\ge \sigma^2(\varphi)$, the right-hand side of the above inequality is minimized for $U\in \big[max(S,a\cdot\lVert\varphi\rVert/\log(1+\delta_0^2/(12+13\delta_0))),\infty\big)$ at
\[U_\mathrm{min} := \max\Big( S, \sqrt{a}\cdot\sqrt{60}(1+\delta_0^{-1})\lVert\varphi\rVert^{\frac32}, a\cdot \frac{\lVert\varphi\rVert}{\log(1+\delta_0^2/(12+13\delta_0))} \Big).\]
By this substitution, the reader can easily get a bound only in terms of $a$ and $S$.
\end{theomain}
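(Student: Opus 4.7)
The plan is to refine the spectral Chernoff argument behind Theorem \ref{theo:main-conc} by extracting one more order in the perturbative expansion of the leading eigenvalue. I would begin with the one–sided exponential Chebyshev bound, valid for $t>0$:
\[
 \pr_\mu\bigl[\hat\mu_n(\varphi)-\mu_0(\varphi)\ge a\bigr]
   \le e^{-nt(\mu_0(\varphi)+a)}\esp_\mu\bigl[e^{tS_n}\bigr],\qquad S_n=\sum_{k=1}^n\varphi(X_k),
\]
and then rewrite the moment generating function via the perturbed transfer operator
$\op{L}_t f(x)=\int e^{t\varphi(y)}f(y)\dd m_x(y)$, so that
$\esp_\mu[e^{tS_n}]=\mu(\op{L}_t^n\one)$. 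The effective perturbation results announced in Section \ref{sec:perturbation} give, for $t$ in an explicit disk of radius $r_0(\delta_0)$, the decomposition $\op{L}_t=\lambda(t)\Pi(t)+N(t)$ with quantitative control on the leading eigenvalue $\lambda(t)$, on the rank-one projector $\Pi(t)$, and on the spectral radius of $N(t)$; these are precisely the ingredients already used for Theorem \ref{theo:main-conc}, so I would import them verbatim.

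The new work is in the expansion of $\log\lambda(t)$. Differentiating $\op{L}_t$ in $t$ and using the effective resolvent bounds, I would verify $\lambda(0)=1$, $\lambda'(0)=\mu_0(\varphi)$ and $\lambda''(0)-\lambda'(0)^2=\sigma^2(\varphi)$, so that on the admissible disk
\[
 \log\lambda(t)-t\mu_0(\varphi)-\tfrac{t^2}{2}\sigma^2(\varphi)=R(t),\qquad |R(t)|\le 10(1+\delta_0^{-1})^2\,t^3\,\lVert\varphi\rVert^3,
\]
where the explicit constant $10(1+\delta_0^{-1})^2$ is obtained by pushing the Cauchy estimates on $\lambda$ one order further than what was needed in Theorem \ref{theo:main-conc}. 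Grouping the constants coming from $\mu(\Pi(t)\one)$ and from the geometric contribution of $N(t)^n$ into a single front constant (this is where the constraint $a\le (U/\lVert\varphi\rVert)\log(1+\delta_0^2/(12+13\delta_0))$ appears, as it is exactly the condition that keeps $t\lVert\varphi\rVert$ within the admissible disk and keeps $\Pi(t)\one$ close to $\one$), I would obtain, for any $U\ge\sigma^2(\varphi)$ and with the choice $t=a/U$,
\[
 \pr_\mu\bigl[\hat\mu_n(\varphi)-\mu_0(\varphi)\ge a\bigr]
  \le C_0\exp\!\Bigl(-n\bigl[\tfrac{a^2}{2U}-10(1+\delta_0^{-1})^2\tfrac{a^3\lVert\varphi\rVert^3}{U^3}\bigr]\Bigr).
\]
Applying the same bound with $t<0$ to the other tail and combining doubles $C_0$ into the stated $2.637$.

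The second statement is a short calculus lemma. Writing
$g(U):=\frac{a^2}{2U}-10(1+\delta_0^{-1})^2\frac{a^3\lVert\varphi\rVert^3}{U^3}$, minimizing the bound amounts to maximizing $g$ on the admissible set $\{U\ge \max(S,a\lVert\varphi\rVert/\log(1+\delta_0^2/(12+13\delta_0)))\}$. Setting $g'(U)=0$ yields the unique critical point $U_c=\sqrt{60}(1+\delta_0^{-1})\lVert\varphi\rVert^{3/2}\sqrt{a}$, and a direct check of $g''(U_c)<0$ shows it is a maximum, with $g$ increasing on $(0,U_c]$ and decreasing on $[U_c,\infty)$. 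Hence on the admissible half-line the maximum is attained at $\max(U_c,S,a\lVert\varphi\rVert/\log(1+\delta_0^2/(12+13\delta_0)))$, which is exactly $U_{\min}$.

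The hard part is the middle step: extracting the cubic Taylor estimate on $\log\lambda(t)$ with the sharp explicit constant $10(1+\delta_0^{-1})^2$ and with a disk of validity large enough to accommodate $t=a/U$ up to the stated threshold. The Theorem \ref{theo:main-conc} machinery only requires controlling $\lambda$ and $\lambda'$, so here one has to revisit the Cauchy integral bounds from Section \ref{sec:perturbation} carefully, identify $\lambda''(0)$ with $\sigma^2(\varphi)+\mu_0(\varphi)^2$ and absorb all ancillary contributions (from $\Pi(t)-\Pi(0)$ and from the tail of $N(t)^n$) into the universal constants $2.637$ and $10$ without blowing up their $\delta_0$-dependence beyond $(1+\delta_0^{-1})^2$.
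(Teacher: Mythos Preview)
Your proposal is correct and follows essentially the same route as the paper: apply the Chernoff bound with the spectral expression $\esp_\mu[e^{tS_n}]=\mu(\op{L}_{t\varphi}^n\one)$, use the second-order Taylor expansion of the leading eigenvalue (Corollary \ref{coro:L}) to get a cubic remainder bounded by $10(1+\delta_0^{-1})^2$ times $\lVert\varphi\rVert^3$, absorb the projector and remainder contributions into the front constant $2.637$, and set $t=a/U$ (equivalently $t=an/U$ in the paper's normalization). Your optimization in $U$ via the critical point $U_c=\sqrt{60}(1+\delta_0^{-1})\lVert\varphi\rVert^{3/2}\sqrt{a}$ is exactly what the paper records as $U_{\min}$.
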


For small enough $a$, one takes $U=S$ and the positive term in the exponential is negligible; the leading term $-na^2/(2S)$ is then exactly the best we can expect given the bound $S$: since $(\varphi(X_k))_k$ satisfies a Central Limit Theorem with variance $\sigma^2(\varphi)$, any better value would necessarily imply a better bound on $\sigma^2(\varphi)$.

Paulin \cite{Paulin2015} (Theorem 3.3) obtained a similar result for stationary, reversible Markov Chains with a spectral gap in $L^2$; the advantage of our result is to dispense from stationarity, reversibility, and to apply to various functional spaces.

Section \ref{sec:cube} contains an example where Theorem \ref{theo:main-second} improves crucially on Theorem \ref{theo:main-conc}. However bounding the dynamical variance can be difficult in general. In practice, one could use other tools to estimate it and then apply Theorem \ref{theo:main-second}.

\subsection{A Berry-Esseen bound}

Our third main result, proven in section \ref{sec:BE}, quantifies the speed of convergence in the Central Limit Theorem.
\begin{theomain}\label{theo:main-BE}
Assume $\sigma^2(\varphi)>0$ and let 
$\tilde\varphi:=\frac{\varphi-\mu_0(\varphi)}{\sigma(\varphi)}$
be the reduced centered version of $\varphi$, and denote by
$G, F_n$ the distribution functions of the reduced centered normal law and of $\frac{1}{\sqrt{n}}(\tilde\varphi(X_1)+\dots+\tilde\varphi(X_n))$, respectively.
For all $n\ge 1$ it holds
\[\lVert F_n-G\rVert_\infty \le \frac{(148+285\delta_0^{-1}+123 \delta_0^{-2}) \max\{\lVert\tilde\varphi\rVert,\lVert\tilde\varphi\rVert^3\}}{\sqrt{n}} .\]
\end{theomain}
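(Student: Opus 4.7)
The plan is to follow the Nagaev--Guivarc'h Fourier approach in its effective form, combining Esseen's smoothing lemma with the quantitative perturbative spectral theory recalled in Section \ref{sec:perturbation}.

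First, I would introduce the twisted transfer operator $\op{L}_t f(x) := \op{L}_0(e^{it\tilde\varphi}f)(x)$. Since $\fspace{X}$ is a Banach algebra containing $\one$ and $\tilde\varphi$, each $\op{L}_t$ acts on $\fspace{X}$, and iterating the Markov property gives the fundamental identity $\psi_n(t) := \esp_\mu[e^{itS_n/\sqrt n}] = \mu\bigl(\op{L}_{t/\sqrt n}^{\,n}\one\bigr)$, where $S_n = \tilde\varphi(X_1)+\dots+\tilde\varphi(X_n)$. Expanding the exponential series yields $\lVert\op{L}_t-\op{L}_0\rVert\le |t|\lVert\tilde\varphi\rVert e^{|t|\lVert\tilde\varphi\rVert}$, so $t\mapsto\op{L}_t$ is an analytic family of operators whose size and higher derivatives admit explicit bounds in terms of $\lVert\tilde\varphi\rVert$.

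Next, by the effective perturbation theorem of Section \ref{sec:perturbation} applied to $\op{L}_0$ (whose simple isolated eigenvalue $1$ is provided by Hypothesis \ref{hypo:L}), there is an explicit threshold $t_0 = t_0(\delta_0,\lVert\tilde\varphi\rVert)$ such that, for $|t|\le t_0$, $\op{L}_t$ has a simple leading eigenvalue $\lambda(t)$ with rank-one projector $\pi_t f = h_t\nu_t(f)$, and its complementary part $Q_t := \op{L}_t-\lambda(t)\pi_t$ has spectral radius at most, say, $1-\delta_0/2$, everything depending analytically on $t$. Thus
\[
\psi_n(t) = \lambda(t/\sqrt n)^n\,\mu(h_{t/\sqrt n})\,\nu_{t/\sqrt n}(\one) + \mu\bigl(Q_{t/\sqrt n}^{\,n}\one\bigr),
\]
where the second term decays geometrically in $n$ uniformly on $|t|\le t_0\sqrt n$. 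A quantitative Taylor expansion at $0$ then gives, using $\mu_0(\tilde\varphi)=0$ and $\sigma^2(\tilde\varphi)=1$,
\[
\lambda(t)=1-\tfrac{t^2}{2}+O(|t|^3\lVert\tilde\varphi\rVert^3),\quad h_t=\one+O(|t|\lVert\tilde\varphi\rVert),\quad \nu_t(\one)=1+O(|t|\lVert\tilde\varphi\rVert),
\]
with fully explicit constants. Raising to the $n$-th power and bounding the cold-start factor via $|\mu(h_{t/\sqrt n})-1|\le\lVert h_{t/\sqrt n}-\one\rVert$ (this is where Hypothesis \ref{hypo:X}.i is used), one reaches an estimate of the form $|\psi_n(t)-e^{-t^2/2}|\le C(\delta_0)(|t|^3\lVert\tilde\varphi\rVert^3/\sqrt n)e^{-t^2/4}$ valid throughout $|t|\le T$, provided the cutoff $T = c(\delta_0)\sqrt n/\lVert\tilde\varphi\rVert$ stays within the perturbative regime.

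Finally, Esseen's smoothing lemma reduces the theorem to $\lVert F_n-G\rVert_\infty\le\frac{1}{\pi}\int_{-T}^T\bigl|(\psi_n(t)-e^{-t^2/2})/t\bigr|\dd t + C/T$: the integral contributes $O(\lVert\tilde\varphi\rVert^3/\sqrt n)$ after integrating $|t|^2 e^{-t^2/4}$, and $1/T$ contributes $O(\lVert\tilde\varphi\rVert/\sqrt n)$, producing the announced $\max\{\lVert\tilde\varphi\rVert,\lVert\tilde\varphi\rVert^3\}$ dependence. The main obstacle is keeping every constant explicit along the whole chain: the quantitative perturbation output, the effective Taylor remainders for $\lambda,h_t,\nu_t$, the uniform control of $Q_t$, and the balance between the perturbative range $t_0\sqrt n$ and the Esseen cutoff $T$. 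The specific coefficients $148,\,285,\,123$ and the polynomial dependence on $\delta_0^{-1}$ appearing in the final bound emerge from the careful book-keeping of these steps.
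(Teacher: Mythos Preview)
Your outline is essentially the paper's own approach: Nagaev--Guivarc'h via the twisted operator, effective spectral perturbation for $\lambda(t)$, $\op{P}_t$, and the complementary part, then Esseen/Feller smoothing. The paper's only notable technical wrinkle beyond your sketch is that it bounds $\lvert\phi_n(t)-\gamma(t)\rvert$ through Feller's factorization $\lvert a^n-b^n\rvert\le n\lvert a-b\rvert c^{n-1}$ (with $a=\phi_n(t)^{1/n}$, $b=\gamma(t)^{1/n}$, $c=1.32^{1/n}e^{-\alpha t^2/n}$) rather than directly; this is convenient for the book-keeping but not conceptually different from your route.

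Three small points in your sketch would need tightening. First, the bound you state for $\lvert\psi_n(t)-e^{-t^2/2}\rvert$ drops the lower-order term $O(\lvert t\rvert\lVert\tilde\varphi\rVert/\sqrt n)\,e^{-t^2/4}$ coming from the projector perturbation; it does not get absorbed into the $\lvert t\rvert^3$ term for small $t$, and in the paper it survives into the final integral (the $g$-term in Lemma~\ref{lemm:majo-BE2}). Second, your cutoff $T=c(\delta_0)\sqrt n/\lVert\tilde\varphi\rVert$ only enforces the perturbative smallness of $\op{L}_{t/\sqrt n}-\op{L}_0$; to ensure the cubic remainder in the exponent stays below $t^2/4$ you also need $T\lesssim\sqrt n/\lVert\tilde\varphi\rVert^3$, which is why the paper takes $T\propto\sqrt n/\max\{\lVert\tilde\varphi\rVert,\lVert\tilde\varphi\rVert^3\}$ and why that $\max$ appears in the final statement. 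Third, the claim ``for all $n\ge1$'' needs the separate observation (Lemma~\ref{lemm:away0}) that $\lVert\tilde\varphi\rVert\ge\sqrt{\delta_0/2}$, which makes the right-hand side exceed $1$ whenever $n$ is too small for the perturbative estimates to apply.
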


The absence of a lower bound for $n$ simply comes from the fact that for small $n$, the right-hand side is greater than $1$ (see Lemma \ref{lemm:away0}) and the inequality is thus vacuously true.

\begin{rema}
Note that $\sigma^2(\varphi)$ is always non-negative, as it can be rewritten as
\[\lim_{n\to\infty} \frac1n \mathrm{Var}_{\mu_0}\big(\sum_{k=1}^n \varphi(X_k)\big)\]
(where the $\mu_0$ subscript means that the assumption $X_0\sim \mu_0$ is made). However, $\sigma^2(\varphi)$ can vanish even when $\varphi$ is not constant modulo $\mu_0$, as in the case of a dynamical system when $m_x$ is supported on $T^{-1}(x)$ for some map $T:\Omega\to\Omega$, and $\varphi$ is a coboundary: $\varphi=g-g\circ T$ for some $g$. One can for example see details \cite{GKLM}, where $\sigma^2$ is interpreted as a semi-norm. Whenever $\sigma^2(\varphi)=0$, one can use the present method to obtain stronger  non-asymptotic concentration inequalities, giving small probability to deviations $a$ such that $a/\lVert\varphi\rVert \gg 1/n^{2/3}$ instead of $a/\lVert\varphi\rVert \gg 1/\sqrt{n}$.
\end{rema}

There are numerous works on Berry-Esseen bounds. In the case of independent identically distributed random variables, the optimal constant is not yet known (the best known constant is, to my knowledge, given by Tyurin \cite{Tyurin}). Berry-Esseen bounds for Markov chains go back to \cite{Bolthausen}, but I know only of two previous \emph{effective} results, by Dubois \cite{Dubois} and by Lezaud \cite{Lezaud2001}.

The scope of Dubois' result is quite narrower than ours, as it is only written for uniformly expanding maps of the interval and Lipschitz observables (though the method is expected to have wider application), and our numerical constant is much better: while the dependences on the parameters of the system are stated differently and thus somewhat difficult to compare, Dubois has a front constant of 11460 which is quite large for practical applications (the order of convergence being $1/\sqrt{n}$, this constant has a squared effect on the number of iterations needed to achieve a given precision).

The scope of Lezaud's Berry-Esseen bound is also restricted, to ergodic reversible Markov chains. Moreover he gets a front constant proportional to the $L^2(\mu_0)$-norm of the density of the distribution of $X_0$ with respect to the stationary distribution; in comparison, our result is insensitive to the distribution of $X_0$.

\paragraph{Application to dynamical systems}
As is well-known, limit theorems for Markov chain also apply in a dynamical setting (see e.g. \cite{gouezel2015limit}).
 Given a $k$-to-one map $T:\Omega\to \Omega$, one defines the transfer operator of a potential $A\in\fspace{X}$ by
\[\op{L}_{T,A} f(x) = \sum_{y\in T^{-1}(x)} e^{A(y)} f(y).\]
One says that $A$ is normalized when $\op{L}_{T,A}\one=\one$. This condition exactly means that $m_x=\sum_{y\in T^{-1}(x)} e^{A(y)} \delta_y$ is a probability measure for all $x$, making $\op{L}_{T,A}$ the averaging operator of a transition kernel. We could consider more general maps $T$, considering a transition kernel that is supported on its inverse branches.

If the transfer operator has a spectral gap, then the stationary measure $\mu_0$ is unique, and readily seen to be $T$-invariant. We shall denote it by $\mu_A$ to stress the dependence on the potential. The corresponding stationary Markov chain $(Y_k)_{k\in\mathbb{N}}$ satisfies all results presented above; but for each $n$, the time-reversed process defined by $X_k=Y_{n-k}$ (where $0\le k \le n$) satisfies $X_{k+1}=T(X_k)$: all the randomness lies in $X_0=Y_n$. Having taken $Y_n$ stationary makes the law of $Y_n$, i.e. $X_0$, independent of the choice of $n$. It follows:
\begin{coromain}
For all normalized $A\in \fspace{X}$ such that $\op{L}_{T,A}$ is contracting with gap $\delta_0$, for all $\varphi\in\fspace{X}$, Theorems \ref{theo:main-conc}, \ref{theo:main-second} and \ref{theo:main-BE} hold for the random process
$(X_k)_{k\in\mathbb{N}}$ defined by $X_0\sim \mu_A$ and $X_{k+1}=T(X_k)$.
\end{coromain}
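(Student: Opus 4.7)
The strategy is the time-reversal trick already hinted at in the paragraph preceding the corollary: apply Theorems~\ref{theo:main-conc}, \ref{theo:main-second} and \ref{theo:main-BE} to the stationary Markov chain generated by $\mchain{M}=(m_x)$ with $m_x=\sum_{y\in T^{-1}(x)} e^{A(y)}\delta_y$, then transfer the conclusion to the forward dynamical orbit by reversing time.

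First, I would set up the auxiliary Markov chain $(Y_k)_{k\ge 0}$ with transition kernel $\mchain{M}$ and initial law $Y_0\sim \mu_A$. The hypotheses of the main theorems are satisfied by assumption: $A$ normalized ensures each $m_x$ is a probability measure and $\op{L}_0=\op{L}_{T,A}$ is the associated averaging operator, with $\op{L}_0\one=\one$, while the standing assumption of a spectral gap $\delta_0$ on $\fspace{X}$ gives hypothesis~\ref{hypo:L}. Hypothesis~\ref{hypo:X} on $\fspace{X}$ is part of the global framework. Applying Theorems~\ref{theo:main-conc}, \ref{theo:main-second}, \ref{theo:main-BE} to this chain with $\mu=\mu_A$ yields the announced bounds for $\hat\mu_n^Y(\varphi)=\frac{1}{n}\sum_{k=1}^n \varphi(Y_k)$.

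Second, I would establish the deterministic-reversal identity: for every fixed $n$, the random vector $(Y_0,Y_1,\dots,Y_n)$ has the same law as $(X_n,X_{n-1},\dots,X_0)$, where $X_0\sim\mu_A$ and $X_{k+1}=T(X_k)$. This is a short induction, using that $m_x$ is supported on $T^{-1}(x)$, so $T(Y_{k+1})=Y_k$ almost surely; the backward chain is the deterministic forward iteration of $T$ starting from $Y_n\sim\mu_A$ (here the choice $Y_0\sim\mu_A$ is crucial, as it makes $Y_n\sim\mu_A$ as well). In particular
\[
\sum_{k=1}^n \varphi(Y_k) \stackrel{d}{=} \sum_{k=1}^n \varphi(X_{n-k}) = \sum_{j=0}^{n-1}\varphi(X_j).
\]

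Third, I would bridge the tiny indexing mismatch between $\sum_{j=0}^{n-1}\varphi(X_j)$ and the statement's $\sum_{k=1}^n \varphi(X_k)$. They differ by $\varphi(X_n)-\varphi(X_0)$, so the two empirical means differ by at most $2\lVert\varphi\rVert_\infty/n\le 2\lVert\varphi\rVert/n$. For Theorems~\ref{theo:main-conc} and \ref{theo:main-second} this correction is absorbed by replacing $a$ with $a-2\lVert\varphi\rVert/n$, a change that is harmless in the stated regimes; alternatively one could restate the corollary for $\frac{1}{n}\sum_{k=0}^{n-1}\varphi(X_k)$, which matches the $Y$-chain exactly. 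For Theorem~\ref{theo:main-BE}, the normalized boundary term is $O(1/\sqrt n)$, which is already of the order of the Berry-Esseen bound and can be folded into the universal constants. The main technical obstacle is this last bookkeeping: one must verify that in each of the three regimes the $O(1/n)$, resp.\ $O(1/\sqrt n)$, shift does not degrade the explicit constants beyond what is already stated, which is essentially a routine but slightly tedious estimate.
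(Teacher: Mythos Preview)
Your approach is correct and matches the paper's: apply the main theorems to the stationary backward Markov chain $(Y_k)$ and use the time-reversal $X_k=Y_{n-k}$ to transfer the conclusions to the forward orbit. The paper's argument is exactly the paragraph preceding the corollary, with no further details.

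Your ``third step'' is unnecessary, however, and the difficulty you anticipate does not arise. Since $\mu_A$ is $T$-invariant and $X_{k+1}=T(X_k)$, the process $(X_k)_{k\ge 0}$ is stationary; hence $(X_0,\dots,X_{n-1})\stackrel{d}{=}(X_1,\dots,X_n)$ and
\[
\sum_{j=0}^{n-1}\varphi(X_j)\ \stackrel{d}{=}\ \sum_{k=1}^{n}\varphi(X_k).
\]
There is therefore no indexing mismatch to absorb, no $O(1/n)$ correction in the concentration inequalities, and no extra $O(1/\sqrt{n})$ term in the Berry--Esseen bound. What you flagged as ``the main technical obstacle'' evaporates once you use the $T$-invariance of $\mu_A$ (which the paper notes explicitly just before the corollary).
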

In this context, spectral gap was proved in many cases under the impetus of Ruelle, see e.g. the books \cite{Baladi,Ruelle}, the recent works \cite{Bruin-Todd,CV,Cyr-Sarig09}, and references therein. Chazottes and Gou\"ezel \cite{chazottes2012optimal} proved concentrations inequalities for non-uniformly hyperbolic dynamical systems, but with a non-explicit constant.

Let me finally mention \cite{K:HT} (based on the same effective perturbation theory as the present paper) and \cite{K:WeaklyExp}.

\section{Examples}\label{sec:examples}

In this Section we briefly present some basic examples where our results apply; detailed proofs of the claims can be found in the note \cite{K:examples}.

\subsection{Chains with Doeblin's minorization}\label{sec:Doeblin}

The simplest example of a Banach Algebra of functions is $L^\infty(\Omega)$, the set of measurable bounded functions, which we shall endow with the norm
$\lVert f\rVert = \lVert f\rVert_\infty + \sup f-\inf f$.
Observe that convergence of measures in duality to $L^\infty(\Omega)$ is convergence in total variation.
For a transition kernel $\mchain{M}$, having an averaging operator $\op{L}_0$ with a spectral gap is a very strong condition, called \emph{uniform ergodicity} (the second term in the norm above is designed to get this equivalence). Under the (slightly stronger) contraction hypothesis, for any bounded measurable observable $\varphi$ Theorem \ref{theo:main-conc} thus yields for small enough $a$ an effective inequality of the form
\[\pr_\mu\Big[\lvert\hat\mu_n(\varphi)-\mu_0(\varphi)\rvert\ge a\Big]    \le 2.488 \exp\Big(- C\frac{n a^2}{\lVert\varphi\rVert_\infty^2} \delta_0  \Big)\]
where $\delta_0$ is the gap of the contraction of the Markov chain and $C$ is an absolute explicit constant.
Such explicit inequalities where obtained by Glynn and Ormoneit \cite{Glynn2002} and Kontoyiannis, Lastras-Montaño and Meyn \cite{Kontoyiannis-LMM} using the characterization of uniform ergodicity by the \emph{Doeblin minorization condition}; they obtain a non-optimal quadratic dependency on the gap (although their results are stated with another, directly related parameter $\beta$). More recently, an effective concentration inequality with the optimal dependency on $\delta_0$ and better constants than ours was obtained by Paulin \cite{Paulin2015} (Corollary 2.10). That result is stated in term of a certain mixing time, and for concentration around the expectation of $\hat\mu_n(\varphi)$; but it can be rephrased in term of the gap, and the bias $\hat\mu_n(\varphi)-\mu_0(\varphi)$ can easily be bounded. Dedeker and Gou\"ezel  \cite{dedeker2015subgaussian} proved concentration results (that can be made effective) under the more general hypothesis of \emph{geometric} ergodicity (they actually prove that geometric ergodicity is characterized by a subgaussian concentration inequality).

\subsection{Discrete hypercube}\label{sec:cube}

It is interesting to consider the same toy example as Joulin and Ollivier \cite{JO}, the lazy random walk on the discrete hypercube $\{0,1\}^N$: the transition kernel $\mchain{M}$ chooses uniformly a slot $i\in\{1,\dots, N\}$ and replaces it with the result of a fair coin toss.

We consider two kind of observables: $\frac1N$ Lipschitz maps such as the ``polarization'' $\rho$ giving the proportion of $1$'s in its argument, and the characteristic function $\one_S$ of a subset $S\subset \{0,1\}^N$. We shall distinguish further the case of a very regular set
$S=[0]:=\{(0,x_2,\dots,x_N) \colon x_i\in\{0,1\}\}$
and the case of ``scrambled'' sets, i.e. such that the dynamical variance of $\one_S$ is bounded by a constant independent of the dimension $N$; this is the case of sets such that every vertex has exactly $2pN$ neighbors with the same value of $\one_S$, where $p$ is fixed independently of $N$.

We compare our results with those of Joulin and Ollivier in Table \ref{tab:cube}. In the case of $\frac1N$-Lipschitz observable we apply Theorem \ref{theo:main-conc} with the weighted Lipschitz norm $\lVert\cdot \rVert_L := \lVert\cdot\rVert_\infty+N\Lip(\cdot)$; in the case of $\one_{[0]}$ we apply Theorem \ref{theo:main-conc} but with the ``local total variation'' norm
\[\lVert f\rVert_W:= \lVert f\rVert_\infty+\sup_{x\in\{0,1\}^N} \sum_{y\sim x} \lvert f(y)-f(x) \rvert\]
where $\sim$ denotes adjacency ($x\sim y$ whenever they differ in exactly one coordinate); in the case of $\one_S$ with a scrambled $S$, we apply Theorem \ref{theo:main-second} with the norm $\lVert\cdot\rVert_L$.
One sees that we obtain a weaker estimate in the case of $\rho$, but a better one in the case of $\one_{S}$, by exploiting the flexibility of our results in the choice of norm and in the possible use of bounds on the dynamical variance. The case of scrambled sets is notable, as we get a runtime independent of the dimension $N$.
\begin{table}[htp]
\centering
\begin{tabular}{r|ccc}
 & $\frac1N$-Lip maps & $\one_{[0]}$ & $\one_S$, scrambled $S$ \\
\hline
Joulin-Ollivier & $O\big(N+\frac{1}{a^2}\big)$ & $O\big(\frac{N^2}{a^2}\big)$ & $O\big(\frac{N^2}{a^2}\big)$ \\
Our best result & $O\big(\frac{N}{a^2}\big)$ & $O\big(\frac{N}{a^2}\big)$ & $O\big(\frac{1}{a^2}\big)$ 
\end{tabular}
\caption{Runtime to ensure error below $a \ll 1$ with good probability.}\label{tab:cube}
\end{table}

\subsection{Bernoulli convolutions and BV observables}
\label{sec:Bernoulli}

As a last example, let us consider the ``Bernoulli convolution'' of parameter $\lambda\in(0,1)$, defined as the law $\beta_\lambda$ of the random variable 
$\sum_{k\ge 1} \epsilon_k \lambda^k$
where the $\epsilon_k$ are independent variables taking the value $1$ with probability $1/2$ and the value $-1$ with probability $1/2$.

When $\lambda<1/2$, the support of $\beta_\lambda$ is a Cantor set of zero Lebesgue measure, so that $\beta_\lambda$ is singular (with respect to Lebesgue measure). When $\lambda=1/2$, $\beta_\lambda$ is the uniform measure on $[-1,1]$. But when $\lambda\in (1/2,1)$ (which we assume from now on), the question of the absolute continuity of $\beta_\lambda$ is very difficult, and fascinating. It was proved by Erd\"os \cite{Erdos1939} that if $\lambda$ is the inverse of a Pisot number, then $\beta_\lambda$ is singular, and a while later Solomyak discovered that for Lebesgue-almost all $\lambda$, $\beta_\lambda$ is absolutely continuous \cite{Solomyak1995}. See \cite{Peres2000} for more information on these questions.

One can realize $\beta_\lambda$ as the stationary law of the Markov transition kernel 
\[\mchain{M}=\big(m_x= \frac12 \delta_{T_0(x)}+ \frac12\delta_{T_1(x)}\big)_{x\in \mathbb{R}}\]
where $T_0(x) = \lambda x-\lambda$ and $T_1(x)=\lambda x+\lambda$.
In order to evaluate $\beta_\lambda(\varphi)$ by a MCMC method, one cannot use the methods developed for ergodic Markov chains since, conditionally to $X_0=x$, the law $m_x^k$ of $X_k$ is atomic and thus singular with respect to $\beta_\lambda$: $d_{\mathrm{TV}}(m_x^k,\beta_\lambda) = 1$ for all $k$. The convergence only holds for observables satisfying some regularity assumption, and it is natural to ask what regularity is needed.

Our results can deal with observables of bounded variation, a regularity which has the great advantage over e.g. Lipschitz to include the characteristic functions of intervals.
It can be proved that some iterate of $\mchain{M}$ is contracting on the space $\BV$ in the sense of Hypothesis \ref{hypo:L} (precisely, it is sufficient to iterate $\ell := \lfloor 1+ \log 2/\log \frac1\lambda\rfloor$ times). Applying Theorem \ref{theo:main-conc} to $(Y_k = X_{k\ell})_{k\ge 0}$ and setting $\hat\mu_n^Y = \frac1n \sum_{k=1}^n \delta_{Y_k}$ we get for any starting distribution $Y_0 \sim \mu$, any $\varphi\in\BV(I_\lambda)$, 
any positive $a <\lVert \varphi\rVert_{\BV}/3(2^{\ell+1}-1)$ and any $n\ge 120\cdot 2^\ell$:
\[ \pr_\mu\Big[\lvert\hat\mu_n^Y(\varphi)-\mu_0(\varphi)\rvert\ge a\Big]    \le 2.488 \exp\Big(- \frac{n a^2}{\lVert\varphi\rVert_{\BV}^2 (16.65 \cdot 2^\ell + 5.12)} \Big).\]

To the best of my knowledge, chains of this type together with BV observables could not be handled effectively by previously known results. For example \cite{Gomez-Dartnell2012} needs the observable to be at least $C^2$ to have explicit estimates, and they do not give a concentration inequality.

\section{Connection with perturbation theory}
\label{sec:perturbation}

To any $\varphi\in\fspace{X}$ (sometimes called a ``potential'' in this role) is associated a weighted averaging operator, called a transfer operator in the dynamical context:
\[\op{L}_\varphi f(x) = \int_\Omega  e^{\varphi(y)} f(y) \dd m_{x}(y).\]

The classical guiding idea for the present work combines two observations.
First, we have
\[\op{L}_\varphi^2 f(x_0) = \int_\Omega e^{\varphi(x_1)} \op{L}_\varphi f(x_1) \dd m_{x_0}(x_1) = \int_{\Omega\times\Omega} e^{\varphi(x_1)} e^{\varphi(x_2)} f(x_2) \dd m_{x_1}(x_2) \dd m_{x_0}(x_1)\]
and by a direct induction, denoting by $\dd m_{x_0}^n(x_1,\dots,x_n)$ the law of $n$ steps of a Markov chain following the transition $\mchain{M}$ and starting at $x_0$, we have
\[ \op{L}_\varphi^n f(x_0) = \int_{\Omega^n} e^{\varphi(x_1)+\dots+\varphi(x_n)} f(x_n) \dd m_{x_0}^n(x_1,\dots,x_n).\]
In particular, applying to the function $f=\one$, we get
\begin{equation*}
\op{L}_{\varphi}^n\one(x_0) = \int_{\Omega^n} e^{\varphi(x_1)+\dots+\varphi(x_n)} \dd m_{x_0}^n(x_1,\dots,x_n) = \esp_{x_0}\big[e^{\varphi(X_1)+\dots+\varphi(X_n)}\big]
\end{equation*}
where $(X_k)_{k\ge 0}$ is a Markov chain with transitions $\mchain{M}$ and the subscript on expectancy and probabilities specify the initial distribution ($x_0$ being short for $\delta_{x_0}$).

It follows by linearity that if the Markov chain is started with $X_0 \sim \mu$ where $\mu$ is any probability measure, then setting $\hat\mu_n\varphi := \frac1n\varphi(X_1)+\dots+\frac1n\varphi(X_n)$ we have
\begin{equation}
\esp_{\mu}\big[\exp(t\hat\mu_n\varphi)\big] = \int \op{L}_{\frac{t}{n}\varphi}^n\one(x) \dd\mu(x).
\label{eq:mgf}
\end{equation}
This makes a strong connection between the transfer operators and the behavior of $\hat\mu_n \varphi$.

Second, when the potential is small (e.g. $\frac tn \varphi$ with large $n$), the transfer operator is a perturbation of $\op{L}_0$, and their spectral properties will be closely related. This is the part that has to be made quantitative to obtain effective limit theorems.

We will state the perturbation results we need after introducing some notation. The letter $\op{L}$ will always denote a bounded linear operator, and $\lVert \cdot\rVert$ will be used both for the norm in $\fX$ and for the operator norm. From now on it is assumed that $\op{L}_0$ is a contraction with gap $\delta_0$. In \cite{K:perturbation} the leading eigenvalue of $\op{L}_0$ is denoted by $\lambda_0$, an eigenvector is denoted by $u_0$, and an eigenform (eigenvector of $\op{L}_0^*$) is denoted by $\phi_0$.

Two quantities appear in the perturbation results below. The first one is the \emph{condition number} $\tau_0 := \frac{\lVert \phi_0\rVert \lVert u_0\rVert}{\lvert \phi_0(u_0)\rvert}$.
To define the second one, we need to introduce $\pi_0$, the projection on $G_0$ along $\langle u_0\rangle$, which here writes $\pi_0(f)=f-\mu_0(f)$, and observe that by the contraction hypothesis  $(\op{L}_0-\lambda_0)$ is invertible when acting on $G_0$ (of course a spectral gap suffices). Then the \emph{spectral isolation} is defined as
\[\gamma_0 := \lVert (\op{L}_0-\lambda_0)_{|G_0}^{-1} \pi_0\rVert.\]

We shall denote by $\op{P}_0$ the projection on $\langle u_0\rangle$ along $G_0$, and set $\op{R}_0=\op{L}_0\circ \pi_0$. We then have the expression
\[\op{L}_0 = \lambda_0 \op{P}_0 + \op{R}_0\]
with $\op{P}_0\op{R}_0=\op{R}_0\op{P}_0=0$. This decomposition will play a role below, and can be done for all $\op{L}$ with a spectral gap: we denote by $\lambda_\op{L}, \pi_\op{L}, \op{P}_\op{L}, \op{R}_\op{L}$ the corresponding objects for $\op{L}$, and by $\lambda,\pi,\op{P}, \op{R}$ we mean the corresponding maps $\op{L}\mapsto \lambda_\op{L}$, etc.

Last, the notation $O_C(\cdot)$ is the Landau notation with an explicit constant $C$, i.e. $f(x)=O_C(g(x))$ means that for all $x$, $\lvert f(x)\rvert \le C \lvert g(x)\rvert$.

\begin{theo}[Theorems 2.3 and 2.6 and Proposition 5.1 (viii) of \cite{K:perturbation}] \label{theo:perturbation}
All $\op{L}$ such that $\displaystyle \lVert\op{L}-\op{L}_0\rVert < 1/(6\tau_0\gamma_0)$ have a simple isolated eigenvalue; $\lambda,\pi,\op{P},\op{R}$ are defined and analytic on this ball. 
Given any $K>1$, whenever $\displaystyle \lVert\op{L}-\op{L}_0\rVert \le (K-1)/(6K\tau_0\gamma_0)$  we have
\begin{align*}
\lambda_\op{L} &= \lambda_0 +O_{\tau_0+\frac{K-1}{3}} \big(\lVert\op{L}-\op{L}_0\rVert\big) \\
\lambda_\op{L} &= \lambda_0 + \phi_0(\op{L}-\op{L}_0)u_0 + O_{K \tau_0\gamma_0}\big(\lVert\op{L}-\op{L}_0\rVert^2\big) \\
\lambda_\op{L} &= \lambda_0 + \phi_0(\op{L}-\op{L}_0)u_0 + \phi_0(\op{L}-\op{L}_0)\op{S}_0(\op{L}-\op{L}_0)u_0 + O_{2 K^2\tau_0^2\gamma_0^2}\Big(\lVert \op{L}-\op{L}_0\rVert^3 \Big)
\\
\op{P}_\op{L} &= \op{P}_0+O_{2K\tau_0\gamma_0}(\lVert\op{L}-\op{L}_0\rVert) \\
\pi_\op{L} &= \pi_0+O_{\tau_0+\frac{K-1}{3}}(\lVert\op{L}-\op{L}_0\rVert)\\
 \Big\lVert D\Big[\frac{1}{\lambda} \op{R}\Big]_\op{L}\Big\rVert &\le  \frac{1}{\lvert \lambda_\op{L}\rvert}+\frac{\tau_0+\frac{K-1}{3}}{\lvert\lambda_\op{L}\rvert^2}\lVert\op{L}\rVert+2K\tau_0\gamma_0.
\end{align*}
\end{theo}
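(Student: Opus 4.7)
The plan is to prove this by the standard resolvent-and-contour-integral approach for spectral perturbation theory, making every constant explicit along the way. First I would write $\op{L}_0 = \lambda_0 \op{P}_0 + \op{R}_0$ and use the spectral gap to bound the resolvent on a well-chosen contour. For $z$ on a circle $\Gamma$ of radius $r$ around $\lambda_0$ (with $r$ small enough to exclude the rest of the spectrum, but comparable to $\lvert\lambda_0\rvert/\gamma_0$), the decomposition yields
\[(z-\op{L}_0)^{-1} = \frac{1}{z-\lambda_0}\op{P}_0 + (z-\op{L}_0)^{-1}\pi_0,\]
and the second summand is controlled in operator norm via $\gamma_0$ (and via $\tau_0$, once we measure the projectors $\op{P}_0$ and $\pi_0$ through their defining eigendata $u_0,\phi_0$). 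A concrete choice of $r$ balancing these two contributions gives a uniform bound on $\lVert (z-\op{L}_0)^{-1}\rVert$ of order $\tau_0\gamma_0$ up to constants — this is where $6\tau_0\gamma_0$ in the smallness hypothesis appears.

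Next I would feed this into the Neumann series
\[(z-\op{L})^{-1} = \sum_{k\ge 0} (z-\op{L}_0)^{-1}\bigl[(\op{L}-\op{L}_0)(z-\op{L}_0)^{-1}\bigr]^k,\]
whose convergence is guaranteed precisely when $\lVert\op{L}-\op{L}_0\rVert \cdot \lVert(z-\op{L}_0)^{-1}\rVert < 1$. The factor $(K-1)/(6K\tau_0\gamma_0)$ in the hypothesis is chosen so that the geometric tail is bounded by $K/(K-1)$ times the first term; this lets one trade sharpness for size of the admissible ball through the parameter $K$. Integrating the Neumann series along $\Gamma$ then produces $\op{P}_\op{L}$, $\pi_\op{L}$, and hence $\lambda_\op{L} = \phi_\op{L}(\op{L} u_\op{L})/\phi_\op{L}(u_\op{L})$ (or equivalently by composing $\op{L}$ with $\op{P}_\op{L}$ and taking $\phi_0$-traces), all as explicit analytic functions of $\op{L}-\op{L}_0$.

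The first-, second- and third-order expansions of $\lambda_\op{L}$ are then obtained by Taylor expanding the Neumann series up to the required order, identifying the principal coefficients via the standard eigenvalue perturbation formulas (the second-order correction $\phi_0(\op{L}-\op{L}_0)u_0$ being the classical Rayleigh–Schrödinger term, and the third-order correction involving the reduced resolvent $\op{S}_0$ appearing from $(\op{L}_0-\lambda_0)_{|G_0}^{-1}\pi_0$), and bounding the remainder by the geometric tail. Cauchy-style estimates on $\Gamma$ convert the contour integral remainders into explicit polynomial bounds in $\tau_0,\gamma_0, K$ matching the stated constants. The bound on $\lVert D[\lambda^{-1}\op{R}]_\op{L}\rVert$ follows from differentiating $\op{R} = \op{L}\circ \pi$ inside the ball of analyticity and combining the already-established bounds on $D\lambda$ and $D\pi$.

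The main technical obstacle is not the existence of these expansions — that is standard Kato-style theory — but obtaining the specific \emph{explicit} constants stated. The delicate point is choosing the radius $r$ of the contour $\Gamma$ so that the resulting two-part bound for $\lVert(z-\op{L}_0)^{-1}\rVert$ is optimized, while simultaneously ensuring that the Neumann series remainders collapse into the clean $K$-dependent expressions in the theorem. Tracking constants through the product of an $L^\infty$-bound on the integrand and the length of $\Gamma$, without loss in any step, is what makes this quantitative version nontrivial; this bookkeeping is carried out in detail in \cite{K:perturbation}.
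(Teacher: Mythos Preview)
The paper does not prove this theorem at all: it is quoted verbatim as a result from the companion paper \cite{K:perturbation} (Theorems 2.3, 2.6 and Proposition 5.1(viii) there), and is used here as a black box. So there is no ``paper's own proof'' to compare against.

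Your sketch is a reasonable outline of the standard Kato-style resolvent method and is almost certainly the approach taken in \cite{K:perturbation}; indeed you yourself defer the actual constant-tracking to that reference in your last sentence. As a self-contained proof the proposal is therefore incomplete by design: the whole content of the theorem lies in the explicit constants, and you have not derived any of them, only indicated where they come from. If the intent was merely to explain the architecture of the argument, that is fine; if the intent was to supply a proof, you would need to actually carry out the optimization of the contour radius and the bookkeeping of the Neumann tails to recover the specific numbers $6\tau_0\gamma_0$, $\tau_0+\tfrac{K-1}{3}$, $K\tau_0\gamma_0$, $2K^2\tau_0^2\gamma_0^2$, etc., which is the nontrivial part.
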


\begin{theo}[Corollary 2.12 from \cite{K:perturbation}]
\label{theo:perturbation-gap}
In the case $\lambda_0=\lVert \op{L}_0\rVert=1$, all $\op{L}$ such that
\[\lVert\op{L}-\op{L}_0\rVert \le \frac{\delta_0(\delta_0-\delta)}{6(1+\delta_0-\delta)\tau_0\lVert\pi_0\rVert}\]
have a spectral gap of size $\delta$ below $\lambda_\op{L}$, with constant $1$, i.e. for all $f$ on a closed hyperplane,  $\lVert \op{L}^n f \rVert \le \lvert \lambda_\op{L}\rvert^n(1 -\delta)^n \lVert f\rVert$.
\end{theo}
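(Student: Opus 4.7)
The plan is to use the spectral decomposition $\op{L}=\lambda_\op{L}\op{P}_\op{L}+\op{R}_\op{L}$ furnished by Theorem~\ref{theo:perturbation}, whose applicability follows from the hypothesis of the statement together with the bound $\gamma_0 \le \lVert\pi_0\rVert/\delta_0$; this bound is itself a consequence of the contraction hypothesis, via the Neumann series $-(\op{L}_0-\Id)^{-1}\circ\pi_0 = \sum_{k\ge 0}\op{L}_0^k\circ\pi_0$ on $G_0$. I take the hyperplane in the conclusion to be $G_\op{L} := \ker\op{P}_\op{L}$, which is $\op{L}$-invariant because $\op{L}$ commutes with its own spectral projector. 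On $G_\op{L}$ one has $\op{L} f = \op{R}_\op{L} f$, so it suffices to establish the single-step bound $\lVert\op{L} f\rVert \le (1-\delta)\lvert\lambda_\op{L}\rvert\lVert f\rVert$ for every $f\in G_\op{L}$; iterating then yields the claimed $n$-step estimate with multiplicative constant $1$.

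To obtain this single-step estimate, I fix $f\in G_\op{L}$, so that $\op{P}_\op{L} f=0$ and $\pi_\op{L} f=f$. I then split $\op{L} f = \op{P}_0 f + \op{R}_0 f + (\op{L}-\op{L}_0)f$ and exploit the two identities $\op{P}_0 f = (\op{P}_0-\op{P}_\op{L})f$ (since $\op{P}_\op{L} f=0$) and $\pi_0 f = f - (\pi_\op{L}-\pi_0)f$ (since $\pi_\op{L} f = f$). Because $\op{L}_0$ is contracting with gap $\delta_0$, $\lVert\op{R}_0 f\rVert = \lVert\op{L}_0\pi_0 f\rVert \le (1-\delta_0)\lVert\pi_0 f\rVert$, and the triangle inequality therefore gives $\lVert\op{L} f\rVert \le \bigl[(1-\delta_0)(1+\lVert\pi_\op{L}-\pi_0\rVert) + \lVert\op{P}_\op{L}-\op{P}_0\rVert + \lVert\op{L}-\op{L}_0\rVert\bigr]\lVert f\rVert$.

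Next I substitute the perturbative estimates of Theorem~\ref{theo:perturbation}: setting $\eta:=\lVert\op{L}-\op{L}_0\rVert$, one has $\lVert\pi_\op{L}-\pi_0\rVert \le (\tau_0+\frac{K-1}{3})\eta$, $\lVert\op{P}_\op{L}-\op{P}_0\rVert \le 2K\tau_0\gamma_0\,\eta$, and $\lvert\lambda_\op{L}\rvert \ge 1-(\tau_0+\frac{K-1}{3})\eta$, together with $\gamma_0\le\lVert\pi_0\rVert/\delta_0$. The target inequality $\lVert\op{L} f\rVert \le (1-\delta)\lvert\lambda_\op{L}\rvert\lVert f\rVert$ is then affine in $\eta$ and rearranges to a condition of the form $\eta \le (\delta_0-\delta)/C$, where $C$ is an explicit expression in $\tau_0$, $\lVert\pi_0\rVert$, $\delta_0$, $\delta$, and the free parameter $K$.

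The main obstacle will be the numerical bookkeeping: after selecting a convenient value of $K$ and using the normalizations $\tau_0\ge 1$, $\lVert\pi_0\rVert\ge 1$ to absorb subleading terms, I expect $C$ to collapse to precisely $6(1+\delta_0-\delta)\tau_0\lVert\pi_0\rVert/\delta_0$, matching the statement. One should also verify a posteriori that the threshold on $\eta$ thus obtained lies inside the domain of validity of Theorem~\ref{theo:perturbation}, so that the perturbative bounds invoked are legitimate; this should be automatic, since the bound in the present statement is no larger than the one needed for Theorem~\ref{theo:perturbation} to apply.
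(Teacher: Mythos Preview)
The paper does not prove this statement: it is quoted verbatim as Corollary~2.12 of \cite{K:perturbation} and used as a black box. There is therefore no in-paper argument to compare against.

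Your reconstruction is sound in outline and can be completed. The splitting $\op{L}f=\op{P}_0f+\op{R}_0f+(\op{L}-\op{L}_0)f$ together with $\op{P}_0f=(\op{P}_0-\op{P}_\op{L})f$ and $\pi_0f=f-(\pi_\op{L}-\pi_0)f$ leads, with $\eta=\lVert\op{L}-\op{L}_0\rVert$ and $a=\tau_0+\tfrac{K-1}{3}$, to the sufficient condition
\[
(2-\delta_0-\delta)\,a + 2K\tau_0\gamma_0 + 1 \;\le\; \frac{\delta_0-\delta}{\eta}.
\]
The part you leave open (``I expect $C$ to collapse to\dots'') is the only genuine gap. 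It closes with the choice $K=1+\delta_0-\delta$: then the radius hypothesis of Theorem~\ref{theo:perturbation} reads $\eta\le \frac{\delta_0-\delta}{6(1+\delta_0-\delta)\tau_0\gamma_0}$, which is implied by the assumption of the statement via $\gamma_0\le\lVert\pi_0\rVert/\delta_0$; and after bounding $\gamma_0$ the same way and using $\tau_0\ge1$, $\lVert\pi_0\rVert\ge1$, $\delta_0\le1$ to absorb the lower-order terms, a short computation shows the displayed inequality holds whenever $\eta\le \frac{\delta_0(\delta_0-\delta)}{6(1+\delta_0-\delta)\tau_0\lVert\pi_0\rVert}$. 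You should state this choice of $K$ explicitly and carry out the two-line verification rather than leaving it as an expectation.
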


Since we will apply these results to the averaging operator $\op{L}_0$, we need to evaluate the parameters in this case.
\begin{lemm}
We have $\lambda_0=1$, $\tau_0=1$, $\lVert \pi_0\rVert \le 2$ and $\gamma_0\le 2/\delta_0$.
\end{lemm}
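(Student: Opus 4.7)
The plan is to identify explicitly the leading spectral data of $\op{L}_0$: take $u_0=\one$ and $\phi_0=\mu_0$, both provided by the hypotheses. Since each $m_x$ is a probability measure, $\op{L}_0\one=\one$, which yields $\lambda_0=1$. By Hypothesis \ref{hypo:X}(iii), $\lVert u_0\rVert=\lVert\one\rVert=1$, and of course $\mu_0(u_0)=1$; thus $\tau_0$ reduces to $\lVert \mu_0\rVert$, the norm of $\mu_0$ viewed as a continuous linear form on $\fspace{X}$.

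Next I would bound $\lVert\mu_0\rVert$ using Hypothesis \ref{hypo:X}(i): for every $f\in\fspace{X}$,
\[\lvert \mu_0(f)\rvert \le \lVert f\rVert_\infty \le \lVert f\rVert,\]
so $\lVert\mu_0\rVert\le 1$; the reverse inequality is immediate from $\mu_0(\one)=1$. Hence $\tau_0=1$. Then, writing $\pi_0(f)=f-\mu_0(f)\one$ (as noted in Remark \ref{rema:stationary}), the triangle inequality together with $\lVert\one\rVert=1$ and $\lvert\mu_0(f)\rvert\le \lVert f\rVert$ gives $\lVert \pi_0 f\rVert\le 2\lVert f\rVert$, so $\lVert \pi_0\rVert\le 2$.

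For $\gamma_0$, the key observation is that $G_0=\ker\mu_0$ is invariant under $\op{L}_0$, because $\mu_0\circ \op{L}_0=\mu_0$ forces $\mu_0(\op{L}_0 f)=\mu_0(f)=0$ when $f\in G_0$. The contraction hypothesis \ref{hypo:L}(ii) then guarantees that the Neumann series
\[(\op{L}_0-1)^{-1}_{|G_0} = -\sum_{n\ge 0}\op{L}_0^n\]
converges absolutely on $G_0$, with norm at most $\sum_{n\ge 0}(1-\delta_0)^n=1/\delta_0$. Composing with $\pi_0$ and using the previous bound yields
\[\gamma_0=\bigl\lVert (\op{L}_0-1)^{-1}_{|G_0}\pi_0\bigr\rVert \le \frac{1}{\delta_0}\cdot \lVert\pi_0\rVert \le \frac{2}{\delta_0},\]
which completes the proof. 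There is no real obstacle here: once the invariance of $G_0$ under $\op{L}_0$ is noted, everything follows routinely from the algebra assumption and the contraction hypothesis.
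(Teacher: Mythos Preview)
Your proof is correct and follows essentially the same route as the paper's: identify $u_0=\one$, $\phi_0=\mu_0$, bound $\lVert\mu_0\rVert$ via $\lVert\cdot\rVert\ge\lVert\cdot\rVert_\infty$, use $\pi_0(f)=f-\mu_0(f)\one$ with the triangle inequality, and sum the Neumann series on $G_0$ using the contraction. Your version is slightly more explicit (noting the invariance of $G_0$ and the reverse inequality $\lVert\mu_0\rVert\ge 1$), but there is no substantive difference.
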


\begin{proof}
By the construction of $\op{L}_0$, we get $u_0=\one$ and $\lambda_0=1$; we mentioned that $\phi_0$ is identified with the stationary measure $\mu_0$.

By hypothesis $\lVert u_0\rVert =1$, and $\lVert \phi_0\rVert =1$ since $\lVert\cdot\rVert \ge \lVert\cdot \rVert_\infty$ and $\phi_0$ is a probability measure. Then $\lvert\phi_0(u_0)\rvert=\lvert \mu_0(\one)\rvert=1$ and it follows $\tau_0=1$.

Since for all $f\in\fspace{X}$, we have $\pi_0(f)=f-\mu_0(f)$ and $\lVert \mu_0(f) \one\rVert = \lvert \mu_0(f)\rvert \le \lVert f\rVert_\infty\le \lVert f\rVert$, we get $\lVert \pi_0\rVert \le 2$. (In general this trivial bound can hardly be improved without more information, notably on $\mu_0$: it may be the case that $\mu_0$ is concentrated on a specific region of the space, and then $f-\mu_0(f)$ could have norm close to twice the norm of $f$.)

Last, from the Taylor expansion $(1-\op{L}_0)^{-1} = \sum_{k\ge0} \op{L}_0^k$, the contraction with gap $\delta_0$, and the upper bound on $\lVert\pi_0\rVert$ we deduce $\gamma_0\le 2/\delta_0$.
\end{proof}

\section{Main estimates}
\label{sec:main}

Standing assumption \ref{hypo:L} ensures that for all small enough $\varphi$ we can apply the above perturbation results; recall that $\mu_0$ is the stationary measure, so that for all $f\in\fspace{X}$ we have $\int \op{L}_0 f \dd\mu_0=\int f\dd\mu_0$.

We will first apply Theorem \ref{theo:perturbation-gap} with $\delta=\delta_0/13$; this is somewhat arbitrary, but the exponential decay will be strong enough compared to other quantities that we don't need $\delta$ to be large. Taking it quite small allow for a larger radius where the result applies.

As a consequence of this choice, the following smallness assumption will often be needed:
\begin{equation} \lVert \varphi\rVert \le \log\Big(1+ \frac{\delta_0^2}{13+12\delta_0}\Big).
\label{eq:A1}
\end{equation}

We will often use $\varphi$ instead of $\op{L}_\varphi$ in subscripts: for example $\lambda_\varphi=\lambda_{\op{L}_\varphi}$ is the largest eigenvalue of $\op{L}_\varphi$, obtained by perturbation of $\lambda_0$, and $\pi_\varphi$ is linear projection on its eigenline along the stable complement appearing in the contraction hypothesis.
\begin{lemm}\label{lemm:estimates}
We have
$\op{L}_{\varphi}(\cdot) = \op{L}_0\big(\sum_{j\ge 0}\frac{\varphi^j}{j!} \cdot\big)$ and $\lVert \op{L}_{\varphi}-\op{L}_0\rVert  \le e^{\lVert \varphi\rVert}-1$.
If \eqref{eq:A1} holds, then we have
\begin{align*}
\lVert \op{L}_{\varphi}-\op{L}_0\rVert
  &\le \frac{\delta_0^2}{13+12\delta_0} \le \frac{1}{25} &
\op{L}_{\varphi} &= \op{L}_0 + O_{1.02}(\lVert \varphi\rVert) \\
&&  &= \op{L}_0+\op{L}_0(\varphi \cdot) + O_{0.507}(\lVert \varphi\rVert^2) \\
\lVert \pi_\varphi\rVert &\le 2.053 &  &= \op{L}_0\big( (1+\varphi+\frac12\varphi^2)\cdot\big) + O_{0.169}(\lVert \varphi\rVert^3).
\end{align*}
Assumption \eqref{eq:A1} is in particular sufficient to apply Theorem \ref{theo:perturbation-gap} with $\delta=\delta_0/13$ and Theorem \ref{theo:perturbation} with $K=1+12\delta_0/13$.
\end{lemm}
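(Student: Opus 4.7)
}

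The starting point is the identity $\op{L}_{\varphi} f = \op{L}_0(e^{\varphi} f)$, which is immediate from the definitions of $\op{L}_0$ and $\op{L}_\varphi$. Using the Banach algebra structure of $\fspace{X}$ (assumption \ref{hypo:X}), the series $e^{\varphi} = \sum_{j\ge 0} \varphi^j/j!$ converges absolutely in $\fspace{X}$ and multiplication by $e^\varphi - 1$ defines a bounded operator of norm at most $e^{\lVert\varphi\rVert}-1$; combining with $\lVert\op{L}_0\rVert=1$ yields the general bound $\lVert \op{L}_\varphi - \op{L}_0 \rVert \le e^{\lVert\varphi\rVert} - 1$.

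Setting $t := \lVert\varphi\rVert$ and assuming \eqref{eq:A1}, i.e. $t \le \log(1+\delta_0^2/(13+12\delta_0))$, the bound just stated immediately gives $\lVert \op{L}_\varphi - \op{L}_0 \rVert \le \delta_0^2/(13+12\delta_0)$. The secondary inequality $\delta_0^2/(13+12\delta_0) \le 1/25$ reduces to the quadratic inequality $25\delta_0^2-12\delta_0-13\le 0$, which holds on $(0,1]$ (equality at $\delta_0=1$). In particular $t\le \log(26/25)$, an absolute upper bound I will use repeatedly.

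For the Taylor expansions I would write
\[ \op{L}_\varphi - \op{L}_0\big((1+\varphi+\tfrac12\varphi^2+\cdots+\tfrac{\varphi^{k-1}}{(k-1)!})\,\cdot\big) = \op{L}_0\Big(\sum_{j\ge k}\tfrac{\varphi^j}{j!}\,\cdot\Big), \]
so the $\fspace{X}$-norm of the remainder is at most $R_k(t) := \sum_{j\ge k} t^j/j!$. The constant in front of $t^k$ is then $\sup_{0\le t\le \log(26/25)} R_k(t)/t^k$, which is attained at the right endpoint since $R_k(t)/t^k$ is increasing. Plugging in $t = \log(26/25)$ (so $e^t-1=1/25$) one verifies $R_1/t \le 1.02$, $R_2/t^2 \le 0.507$ and $R_3/t^3 \le 0.169$; these are simple one-variable numerical checks.

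For the claim that \eqref{eq:A1} suffices to apply Theorems \ref{theo:perturbation-gap} and \ref{theo:perturbation}, I would substitute the values $\tau_0=1$, $\lVert\pi_0\rVert\le 2$, $\gamma_0\le 2/\delta_0$ from the preceding lemma into the two perturbation radii. Theorem \ref{theo:perturbation-gap} with $\delta=\delta_0/13$ requires $\lVert \op{L}_\varphi - \op{L}_0\rVert \le \delta_0\cdot(12\delta_0/13)/(6\cdot(13+12\delta_0)/13\cdot 1\cdot 2) = \delta_0^2/(13+12\delta_0)$; Theorem \ref{theo:perturbation} with $K=1+12\delta_0/13$ requires $\lVert\op{L}_\varphi-\op{L}_0\rVert\le (K-1)/(6K\tau_0\gamma_0)$, which after the same substitutions also simplifies to $\delta_0^2/(13+12\delta_0)$. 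Both are therefore satisfied. Finally, the estimate on $\pi_\varphi$ follows from the bound $\pi_\varphi = \pi_0 + O_{\tau_0+(K-1)/3}(\lVert \op{L}_\varphi - \op{L}_0\rVert)$ given by Theorem \ref{theo:perturbation}: with $\tau_0+(K-1)/3=1+4\delta_0/13$ and the size bound on $\lVert\op{L}_\varphi-\op{L}_0\rVert$, one gets $\lVert\pi_\varphi\rVert \le 2 + (13+4\delta_0)\delta_0^2/(13(13+12\delta_0))$, whose maximum on $(0,1]$ is a touch above $2.052$, fitting within $2.053$. The only real care needed throughout is tracking these small numerical constants and checking monotonicity in $\delta_0\in(0,1]$ where needed.
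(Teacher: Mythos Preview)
Your proposal is correct and follows essentially the same approach as the paper: both use the identity $\op{L}_\varphi f=\op{L}_0(e^\varphi f)$ together with the Banach algebra estimates on the exponential remainders, evaluate the ratios $R_k(t)/t^k$ at the endpoint $t=\log(26/25)$, and obtain the bound on $\lVert\pi_\varphi\rVert$ from the first-order control of $\pi$ in Theorem~\ref{theo:perturbation}. If anything, you are slightly more explicit than the paper in verifying that the perturbation radii of Theorems~\ref{theo:perturbation} and~\ref{theo:perturbation-gap} both simplify to $\delta_0^2/(13+12\delta_0)$ once the values $\tau_0=1$, $\gamma_0\le 2/\delta_0$, $\lVert\pi_0\rVert\le 2$ are substituted.
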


\begin{proof}
The first formula is a rephrasing of the definition of $\op{L}_{\varphi}$;
observe then that thanks to the assumption that $\fspace{X}$ is a Banach algebra, we have
\[\lVert \op{L}_{\varphi}-\op{L}_0\rVert = \lVert \op{L}_0\big((e^{\varphi}-1)\cdot\big) \rVert \le \lVert\op{L}_0\rVert \Big\lVert \sum_{j=1}^\infty \frac{\varphi^j}{j!}\Big\rVert \le \sum_{j=1}^\infty \frac{\lVert \varphi\rVert^j}{j!}  \le e^{\lVert \varphi\rVert}-1.\]

Observing that $x\mapsto x^2/(13+12 x)$ is increasing from $0$ to $1/25$ as $x$ varies from $0$ to $1$ completes the uniform bound of $\lVert \op{L}_{\varphi}-\op{L}_0\rVert$ and gives $\lVert \varphi\rVert \le \log(1+1/25):=b$. By convexity, we deduce that 
$e^{\lVert\varphi\rVert}-1\le (e^b-1) \frac{\lVert\varphi\rVert}{b} \le 1.02 \lVert\varphi\lVert$ and the zeroth order Taylor formula follows.

The higher-order estimates are obtained similarly:
\[\op{L}_\varphi = \op{L}_0\big((\one+\varphi + (e^\varphi-\varphi-1))\cdot\big) =\op{L}_0+\op{L}_0(\varphi\cdot) + O_{\lVert\op{L}_0\rVert}(e^\varphi-\varphi-1)\]
and using the triangle inequality, the convexity of $\frac{e^x-x-1}x$ and the bound on $\varphi$:
\[\lVert e^\varphi-\varphi-1\rVert \le \frac{e^{\lVert\varphi\rVert}-\lVert\varphi\rVert-1}{\lVert\varphi\rVert} \lVert\varphi\rVert \le \frac{e^b-b-1}{b^2} \lVert\varphi\rVert^2 \le 0.507 \lVert\varphi\rVert^2.\]
The second order remainder is bounded by
\[\lVert e^\varphi-\frac12\varphi^2-\varphi-1\rVert \le  \frac{e^b-\frac12 b^2-b-1}{b^3} \lVert\varphi\rVert^3 \le 0.169 \lVert\varphi\rVert^3\]
and finally, we have
\[\lVert \pi_\varphi\rVert \le \lVert \pi_0\rVert +\big(1+\frac{4\delta_0}{13}\big) \Vert \op{L}_\varphi-\op{L}_0\rVert \le 2+\big(1+\frac{4}{13}\big)\frac{1}{25}\le 2.053.\]
\end{proof}

\begin{lemm}\label{lemm:lambda}
Under \eqref{eq:A1} we have 
\begin{align*}
\lvert \lambda_{\varphi}-1\rvert &\le  0.0524 \qquad \lambda_{\varphi} = 1+ O_{1.334}(\lVert \varphi\rVert),\\
\lambda_{\varphi} &= 1+ \mu_0(\varphi) + O_{2.43+2.081 \delta_0^{-1}}(\lVert \varphi\rVert^2)
\end{align*}
and
\[\lambda_{\varphi}  = 1 + \mu_0(\varphi)+ \frac12\mu_0(\varphi^2) + \sum_{k\ge 1}\mu_0( \varphi \op{L}_0^k (\bar\varphi)) + O_{7.41+17.75\delta_0^{-1}+8.49\delta_0^{-2}}(\lVert\varphi\rVert^3).\]
\end{lemm}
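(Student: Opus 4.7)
The three bounds come from applying Theorem~\ref{theo:perturbation} to $\Delta := \op{L}_\varphi-\op{L}_0$ at increasing orders, with the parameters $\lambda_0=\tau_0=1$, $\gamma_0\le 2\delta_0^{-1}$ supplied by the preceding lemma, together with $K=1+12\delta_0/13$ (so $(K-1)/3=4\delta_0/13$) and $\|\Delta\|\le 1.02\,\|\varphi\|$ from Lemma~\ref{lemm:estimates}. The zeroth-order bound is then immediate: the first line of Theorem~\ref{theo:perturbation} gives $|\lambda_\varphi-1|\le (1+4\delta_0/13)\,\|\Delta\|\le 1.334\,\|\varphi\|$, and multiplying by the bound $\|\varphi\|\le\log(26/25)$ coming from \eqref{eq:A1} yields $|\lambda_\varphi-1|\le 0.0524$. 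For the first-order expansion I observe that $\op{L}_\varphi\one=\op{L}_0(e^\varphi)$ and that stationarity of $\mu_0$ gives $\phi_0\Delta u_0 = \mu_0(e^\varphi)-1$; the scalar Taylor estimate $\|e^\varphi-\varphi-\one\|\le 0.507\,\|\varphi\|^2$ from Lemma~\ref{lemm:estimates} rewrites this as $\mu_0(\varphi)+O_{0.507}(\|\varphi\|^2)$, and adding the quadratic-in-$\Delta$ remainder $K\tau_0\gamma_0\,\|\Delta\|^2 \le (2\delta_0^{-1}+24/13)(1.02)^2\,\|\varphi\|^2$ from Theorem~\ref{theo:perturbation} produces the advertised constant $2.43+2.081\,\delta_0^{-1}$.

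The third expansion is the substantive step. One more Taylor order in the same argument gives $\phi_0\Delta u_0 = \mu_0(\varphi)+\tfrac12\mu_0(\varphi^2)+O_{0.169}(\|\varphi\|^3)$, using $\|e^\varphi-\varphi-\tfrac12\varphi^2-\one\|\le 0.169\,\|\varphi\|^3$. For the quadratic contribution I decompose $\Delta = \op{L}_0(\varphi\,\cdot)+\mathcal{R}$ with $\|\mathcal{R}\|\le 0.507\,\|\varphi\|^2$, and expand
\[ \phi_0\Delta\op{S}_0\Delta u_0 = \phi_0\,\op{L}_0(\varphi\,\cdot)\,\op{S}_0\,\op{L}_0\varphi \;+\; R_1+R_2+R_3, \]
where each $R_j$ contains at least one factor $\mathcal{R}$. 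Using the representation $\op{S}_0 = \sum_{k\ge 0}\op{L}_0^k\pi_0$, the identity $\pi_0\op{L}_0\varphi=\op{L}_0\bar\varphi$ (which follows from $\op{L}_0\one=\one$ and $\mu_0\op{L}_0=\mu_0$), and one further use of stationarity, the leading term rewrites as
\[ \mu_0\bigl(\varphi\,\op{S}_0\op{L}_0\varphi\bigr) = \mu_0\Bigl(\varphi\,\sum_{k\ge 1}\op{L}_0^k\bar\varphi\Bigr) = \sum_{k\ge 1}\mu_0(\varphi\,\op{L}_0^k\bar\varphi), \]
exactly the sum in the statement. Each $R_j$ is bounded using the Banach algebra property and $\|\op{S}_0\|=\gamma_0\le 2\delta_0^{-1}$ by a multiple of $\|\varphi\|^3\,\delta_0^{-1}$ (the genuinely $\|\varphi\|^4$ term in $R_3$ is absorbed using $\|\varphi\|\le\log(26/25)$). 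Adding the $\|\Delta\|^3$ remainder from Theorem~\ref{theo:perturbation}, bounded by $2K^2\tau_0^2\gamma_0^2(1.02)^3\,\|\varphi\|^3$, and expanding $(1+12\delta_0/13)^2$ yields a contribution of order $8.49\,\delta_0^{-2}+15.7\,\delta_0^{-1}+7.23$. Summing all three pieces reproduces the claimed $7.41+17.75\,\delta_0^{-1}+8.49\,\delta_0^{-2}$.

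The main difficulty is organisational rather than conceptual: one must (i) identify the quadratic correction $\phi_0\op{L}_0(\varphi\,\cdot)\,\op{S}_0\,\op{L}_0\varphi$ as precisely the covariance series, which requires the correct sign convention $\op{S}_0=\sum_{k\ge 0}\op{L}_0^k\pi_0$ for the eigenprojector resolvent used in Theorem~\ref{theo:perturbation}, and (ii) balance the six separate error terms (the three Taylor residuals for $\phi_0\Delta u_0$, the three mixed terms $R_j$, and the big-$O$ of order $\|\Delta\|^3$) so that each is dominated by the correct power of $\delta_0^{-1}$---the $\delta_0^{-2}$ coming only from the dominant $\|\op{S}_0\|^2\|\Delta\|^3$ contribution, the $\delta_0^{-1}$ from the mixed $R_j$ and from the quadratic-in-$\Delta$ piece of $K\tau_0\gamma_0$, and the $O(1)$ from the scalar Taylor residuals. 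Everything else is routine.
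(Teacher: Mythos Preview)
Your proposal is correct and follows essentially the same approach as the paper's own proof. The only organisational difference is in the second-order step: the paper computes $\op{S}_0(\op{L}_\varphi\one-\op{L}_0\one)$ first (getting $\sum_{k\ge 1}\op{L}_0^k\bar\varphi + O_{1.014\delta_0^{-1}}(\lVert\varphi\rVert^2)$) and then applies $(\op{L}_\varphi-\op{L}_0)$ to the result, producing two error terms, whereas you expand both occurrences of $\Delta$ in $\phi_0\Delta\op{S}_0\Delta u_0$ symmetrically into main plus remainder, producing three cross terms $R_1,R_2,R_3$; the resulting bounds coincide up to rounding and both recover the constant $7.41+17.75\delta_0^{-1}+8.49\delta_0^{-2}$.
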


\begin{proof}
With $K=1+12\delta_0/13$ we have $\tau_0+\frac{K-1}{3}=1+4\delta_0/13$ and by the Theorem \ref{theo:perturbation}, $\op{L}\mapsto\lambda_\op{L}$ has Lipschitz constant at most $1+4/13=17/13$. We get $\lvert \lambda_{\varphi}-\lambda_0\rvert 
  \le \frac{17}{13}\lVert\op{L}_{\varphi}-\op{L}_0\rVert$ from which we deduce both $\lvert \lambda_{\varphi}-1\rvert  \le \frac{17}{13\times 25} \le 0.0524  $ and $\lvert \lambda_{\varphi}-1\rvert  \le \frac{17}{13}1.02 \lVert \varphi\rVert \le 1.334\lVert \varphi\rVert$.

Now we use the first-order Taylor formula for $\lambda$, using $K\tau_0\gamma_0\le 2\delta_0^{-1}(1+12\delta_0/13)= \frac{24}{13}+2\delta_0^{-1}$:
\[\lambda_{\varphi}=1+\mu_0\big( (\op{L}_{\varphi}\one-\op{L}_0\one)\big)+O_{\frac{24}{13}+2\delta_0^{-1}}(\lVert \op{L}_{\varphi}-\op{L}_0\rVert^2),\]
then using $\op{L}_{\varphi}\one-\op{L}_0\one = \op{L}_0(\varphi)+O_{0.507}(\lVert \varphi\rVert^2)$ from Lemma \ref{lemm:estimates} we get
\[\mu_0( \op{L}_{\varphi}\one-\op{L}_0\one) = \mu_0( \op{L}_0(\varphi) ) + O_{0.507}(\lVert \varphi\rVert^2) = \mu_0(\varphi)+O_{0.507}(\lVert \varphi\rVert^2).\]
Using $\lVert \op{L}_{\varphi}-\op{L}_0\rVert \le  1.02\lVert \varphi\rVert$ gives the following constant in the final $O(\lVert \varphi\rVert^2)$ of the first-order formula:
\[0.507+(1.02)^2(\frac{24}{13}+2\delta_0^{-1}) \le 2.43+2.081 \delta_0^{-1}.\]

Then we apply the second-order Taylor formula:
\[\lambda_\varphi = 1+\mu_0(\op{L}_{\varphi}\one-\op{L}_0\one) + \mu_0\Big((\op{L}_\varphi-\op{L}_0) \op{S}_0(\op{L}_\varphi\one-\op{L}_0\one)\Big)+ O_{8K^2\delta_0^{-2}}(\lVert \op{L}_{\varphi}-\op{L}_0\rVert^3).\]
Using $\op{L}_{\varphi}\one-\op{L}_0\one = \op{L}_0(\varphi+\frac12 \varphi^2)+O_{0.169}(\lVert \varphi\rVert^3)$ from Lemma \ref{lemm:estimates} we first get
\begin{align*}
\mu_0(\op{L}_{\varphi}\one-\op{L}_0\one)
  &= \mu_0(\varphi)+\frac12 \mu_0(\varphi^2)+ O_{0.169}(\lVert \varphi\rVert^3).
\end{align*}
To simplify the second term, we recall that  $\op{L}_\varphi-\op{L}_0=\op{L}_0(\varphi \cdot)+O_{0.507}(\lVert \varphi\rVert^2)$ and $\op{S}_0 = (1-\op{L}_0)^{-1}\pi_0 = \big(\sum_{k\ge 0} \op{L}_0^k\big)\pi_0$
where $\pi_0$ is the projection on $\ker\mu_0$ along $\langle\one\rangle$, i.e. $\pi_0(f)=f-\mu_0(f)=:\bar f$, and has norm at most $2$. We thus have (noticing that in the second line both the main term and the remainder term  belong to $\ker\mu_0$):
\begin{align*}
\pi_0(\op{L}_{\varphi}\one-\op{L}_0\one) &= \pi_0 \big(\op{L}_0(\varphi)+O_{0.507}(\lVert\varphi\rVert^2)\big) = \op{L}_0(\bar\varphi) + O_{1.014}(\lVert\varphi\rVert^2) \\
\op{S}_0(\op{L}_\varphi \one-\op{L}_0\one) 
  &= \sum_{k\ge 1} \op{L}_0^k ( \bar\varphi)+O_{1.014 \delta_0^{-1}}(\lVert\varphi\rVert^2).
\end{align*}
We also have $\lVert \op{S}_0(\op{L}_\varphi \one-\op{L}_0\one)\rVert 
  \le  \frac{2}{\delta_0}\lVert \op{L}_\varphi \one-\op{L}_0\one \rVert 
  \le \frac{2.04}{\delta_0}\lVert \varphi\rVert$ and it comes
\begin{align*}
(\op{L}_\varphi-\op{L}_0)\op{S}_0(\op{L}_\varphi \one-\op{L}_0\one) 
  &= \op{L}_0\big(\varphi \sum_{k\ge 1} \op{L}_0^k (\bar\varphi)\big) +O_{1.014 \delta_0^{-1}}(\lVert\op{L}_\varphi-\op{L}_0\rVert \lVert\varphi\rVert^2) + \\
  & \qquad +O_{0.507}(\lVert\varphi\rVert^2\lVert \op{S}_0(\op{L}_\varphi \one-\op{L}_0\one)\rVert) \\
  &= \op{L}_0\big(\varphi \sum_{k\ge 1} \op{L}_0^k (\bar\varphi)\big) +O_{2.07\delta_0^{-1}}(\lVert\varphi\rVert^3) \\
\mu_0(\op{L}_\varphi-\op{L}_0)\op{S}_0(\op{L}_\varphi \one-\op{L}_0\one) 
  &= \sum_{k\ge 1}\mu_0(\varphi  \op{L}_0^k (\bar\varphi))+O_{2.07\delta_0^{-1}}(\lVert\varphi\rVert^3)
\end{align*}
where the reversal of sum and integral is enabled by normal convergence.

Last we observe $8K^2\delta_0^{-2}=8(\frac{12}{13}+\delta_0^{-1})^2 \le 6.82 + 14.77 \delta_0^{-1}+8\delta_0^{-2}$, and we gather all what precedes:
\begin{align*}
\lambda_\varphi 
 &= 1+\mu_0(\op{L}_{\varphi}\one-\op{L}_0\one) + \mu_0\Big((\op{L}_\varphi-\op{L}_0) \op{S}_0(\op{L}_\varphi\one-\op{L}_0\one)\Big)+ O_{8K^2\delta_0^{-2}}(\lVert \op{L}_{\varphi}-\op{L}_0\rVert^3) \\
  &= 1 + \mu_0(\varphi)+ \frac12\mu_0(\varphi^2) + O_{0.169}(\lVert \varphi\rVert^3) + \sum_{k\ge 1}\mu_0( \varphi \op{L}_0^k (\bar\varphi)) +O_{2.07\delta_0^{-1}}(\lVert\varphi\rVert^3) \\
  &\qquad\qquad + O_{(6.82 + 14.77 \delta_0^{-1}+8\delta_0^{-2})1.02^3}(\lVert\varphi\rVert^3)\\
  &=1 + \mu_0(\varphi)+ \frac12\mu_0(\varphi^2) + \sum_{k\ge 1}\mu_0( \varphi \op{L}_0^k (\bar\varphi)) + O_{7.41+17.75\delta_0^{-1}+8.49\delta_0^{-2}}(\lVert\varphi\rVert^3).
\end{align*}
\end{proof}

Under assumption \eqref{eq:A1}, we know that $\op{L}_{\varphi}$ is contracting with gap $\delta_0/13$, and we can write
$\op{L}_{\varphi} = \lambda_{\varphi} \op{P}_{\varphi} + \op{R}_{\varphi}$
where $\op{P}_{\varphi}$ is the projection to the eigendirection along the stable complement and $\op{R}_\varphi=\op{L}_\varphi\pi_\op{\varphi}$ is the composition of the projection to the stable complement and $\op{L}_\varphi$. Then it holds $\op{P}_\varphi\op{R}_\varphi=\op{R}_\varphi\op{P}_\varphi=0$, so that for all $n\in\mathbb{N}$:
\[\op{L}_{\varphi}^n = \lambda_{\varphi}^n \op{P}_{\varphi} + \op{R}_{\varphi}^n.\]

\begin{lemm}\label{lemm:RP}
Under assumption \eqref{eq:A1}, it holds
\begin{align*}
\big\lVert \Big(\frac{1}{\lambda_{\varphi}}\op{R}_{\varphi}\Big)^n\one \big\rVert 
  &\le (6.388+4.08\delta_0^{-1})(1-\delta_0/13)^{n-1} \lVert\varphi\rVert \\ 
\op{P}_\varphi \one &= \one+ O_{3.77+4.08\delta_0^{-1}}(\lVert \varphi\rVert).
\end{align*}
\end{lemm}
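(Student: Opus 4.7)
The plan is to treat the two estimates in reverse order, since the bound on $\op{P}_\varphi \one$ feeds directly into the computation of $\op{R}_\varphi \one$. The overall strategy is: use Theorem \ref{theo:perturbation} to compare $\op{P}_\varphi$ with $\op{P}_0$, observe that $\op{R}_\varphi \one$ lies in the invariant hyperplane $G_\varphi = \ker \op{P}_\varphi$, and then iterate via the spectral gap supplied by Theorem \ref{theo:perturbation-gap} (whose applicability is the content of Lemma \ref{lemm:estimates}).

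For the second assertion, the key remark is that $\op{L}_0 \one = \one$ forces $\op{P}_0 \one = \one$, so $\op{P}_\varphi \one - \one = (\op{P}_\varphi - \op{P}_0) \one$. Theorem \ref{theo:perturbation} provides the Lipschitz estimate $\op{P}_\varphi = \op{P}_0 + O_{2K\tau_0\gamma_0}(\lVert \op{L}_\varphi - \op{L}_0\rVert)$; plugging in $\tau_0 = 1$, $\gamma_0 \le 2/\delta_0$, and $K = 1 + 12\delta_0/13$ (legitimate by Lemma \ref{lemm:estimates}), the Lipschitz constant simplifies to $48/13 + 4/\delta_0$, and combining with $\lVert \op{L}_\varphi - \op{L}_0\rVert \le 1.02\lVert\varphi\rVert$ produces the stated constant $3.77 + 4.08\delta_0^{-1}$ after rounding $1.02\cdot 48/13 \le 3.77$.

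For the first assertion, the invariant splitting $\op{L}_\varphi = \lambda_\varphi \op{P}_\varphi + \op{R}_\varphi$ together with $\op{P}_\varphi \op{R}_\varphi = \op{R}_\varphi \op{P}_\varphi = 0$ shows that $\op{R}_\varphi$ maps $\fX$ into $G_\varphi$ and coincides with $\op{L}_\varphi$ on $G_\varphi$. Hence, for $n \ge 1$,
\[\op{R}_\varphi^n \one = \op{L}_\varphi^{n-1}(\op{R}_\varphi \one),\]
and since $\op{R}_\varphi \one \in G_\varphi$, the spectral-gap bound of Theorem \ref{theo:perturbation-gap} yields $\lVert \op{L}_\varphi^{n-1}(\op{R}_\varphi\one)\rVert \le \lvert\lambda_\varphi\rvert^{n-1}(1 - \delta_0/13)^{n-1}\lVert \op{R}_\varphi\one\rVert$. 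Dividing by $\lvert\lambda_\varphi\rvert^n$ reduces everything to bounding $\lVert \lambda_\varphi^{-1}\op{R}_\varphi\one\rVert$, which follows from the triangle inequality applied to
\[\frac{1}{\lambda_\varphi}\op{R}_\varphi \one = \Big(\frac{1}{\lambda_\varphi}\op{L}_\varphi \one - \one\Big) - (\op{P}_\varphi \one - \one),\]
together with $\lVert \op{L}_\varphi\one - \one\rVert \le 1.02\lVert\varphi\rVert$, $\lvert\lambda_\varphi - 1\rvert \le 1.334\lVert\varphi\rVert$, $\lvert\lambda_\varphi\rvert \ge 0.9476$ (all from Lemmas \ref{lemm:estimates} and \ref{lemm:lambda}), and the bound on $\op{P}_\varphi\one - \one$ already established.

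The main obstacle is purely arithmetic bookkeeping: each ingredient is a direct invocation of the preceding lemmas and perturbation theorems, but verifying that the individual constants assemble cleanly into the target $6.388 + 4.08\delta_0^{-1}$ (and that the factor $\lvert\lambda_\varphi\rvert^{-1}$ paid when normalizing by $\lambda_\varphi$ is absorbed within this constant) requires care; there is nothing subtle happening beyond the triangle inequality once the spectral-gap reduction and the identification $\op{R}_\varphi\one \in G_\varphi$ have been made.
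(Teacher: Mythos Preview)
Your argument is correct. For the $\op{P}_\varphi\one$ estimate and for the iteration step (reducing $\op{R}_\varphi^n\one$ to $\op{R}_\varphi\one$ via the spectral gap on $G_\varphi$), you proceed exactly as the paper does.

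Where you diverge is in bounding $\lVert\lambda_\varphi^{-1}\op{R}_\varphi\one\rVert$. The paper invokes the last item of Theorem \ref{theo:perturbation}, the Lipschitz bound $\lVert D[\frac{1}{\lambda}\op{R}]_\op{L}\rVert \le 6.263+4\delta_0^{-1}$, and uses $\op{R}_0\one=0$ together with $\lVert\op{L}_\varphi-\op{L}_0\rVert\le 1.02\lVert\varphi\rVert$ to get the constant $6.388+4.08\delta_0^{-1}$ directly. You instead expand algebraically, $\lambda_\varphi^{-1}\op{R}_\varphi\one = (\lambda_\varphi^{-1}\op{L}_\varphi\one-\one)-(\op{P}_\varphi\one-\one)$, and assemble the bound from the pieces already obtained in Lemmas \ref{lemm:estimates} and \ref{lemm:lambda} and from your $\op{P}_\varphi\one$ estimate. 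Carrying out your bookkeeping, $\lVert\lambda_\varphi^{-1}\op{L}_\varphi\one-\one\rVert\le (1.02+1.334)/0.9476\cdot\lVert\varphi\rVert\le 2.49\lVert\varphi\rVert$, so the total is about $(6.26+4.08\delta_0^{-1})\lVert\varphi\rVert$, comfortably within the target constant. Your route is more elementary in that it avoids the derivative bound on $\frac{1}{\lambda}\op{R}$ altogether and even yields a marginally sharper constant; the paper's route is more uniform with the rest of the perturbation machinery and does not require first establishing the $\op{P}_\varphi\one$ bound.
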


\begin{proof}
At any $\op{L}=\op{L}_\varphi$ where $\varphi$ satisfies \eqref{eq:A1} we have:
\begin{align*}
\big\lVert D\Big[\frac{1}{\lambda} \op{R}\Big]_\op{L}\big\rVert
  &\le \frac{1}{\lvert \lambda_\op{L}\rvert}+\frac{17/13}{\lvert\lambda_\op{L}\rvert^2}\lvert\op{L}\rvert+2K\tau_0\gamma_0\\
  &\le \frac{1}{0.9476}+\frac{17}{13\times 0.9476^2}\times 1.04+\frac{48}{13}+\frac{4}{\delta_0}
  &&\le 6.263+\frac{4}{\delta_0}
\end{align*}
so that
\begin{align*}
\big\lVert\frac{1}{\lambda_{\varphi}} \op{R}_{\varphi}\one\big\rVert
&= \big\lVert\frac{1}{\lambda_{\varphi}} \op{R}_{\varphi}\one-\frac{1}{\lambda_0} \op{R}_0\one\big\rVert
 \le (6.263+\frac{4}{\delta_0})\lVert\op{L}_{\varphi}-\op{L}_0\rVert \lVert \one\rVert \\
 &\le (6.388+4.08 \delta_0^{-1})\lVert \varphi\rVert.
\end{align*}
Moreover since $\op{R}_\op{L}$ takes its values in $G_\op{L}$ where $\pi_\op{L}$ acts as the identity, we have
$\lVert\op{R}_{\varphi}^n\one\rVert \le \lambda_{\varphi}^{n-1} (1-\delta_0/13)^{n-1} \lVert \op{R}_\op{L}\one\rVert$
from which the first inequality follows.

Then we have $\op{P}_\varphi = \op{P}_0 + O_{2K\tau_0\gamma_0}(\lVert \op{L}_\varphi-\op{L}_0\rVert)$, which yields the claimed result using $K=1+12\delta_0/13$, $\tau_0=1$, $\gamma_0\le 2\delta_0^{-1}$ and $\lVert \op{L}_\varphi-\op{L}_0\rVert \le 1.02 \lVert\varphi\rVert$.
\end{proof}

This control of $\op{P}_\varphi$ and $\op{R}_\varphi$ can be then be used to reduce the estimation of $\op{L}_\varphi^n\one$ to the estimation of $\lambda_\varphi^n$.
\begin{coro}\label{coro:L}
Under assumptions \eqref{eq:A1} and
\begin{equation}
n\ge 1+\frac{\log 100}{-\log(1-\delta_0/13)}
\label{eq:bound-n0}
\end{equation}
it holds
\begin{align*} \op{L}_{\varphi}^n\one &= \lambda_\varphi^n \big( 1+O_{3.834+4.121\delta_0^{-1}}(\lVert\varphi\rVert)\big) \\
\lambda_\varphi^n &= \exp\big(n \mu_0(\varphi) +O_{3.36+2.081 \delta_0^{-1}}(n\lVert\varphi\rVert^2)\big) \\
\lambda_\varphi^n  &=\exp\big(n \mu_0(\varphi) + \frac12 n\sigma^2(\varphi) + O_{10.89+ 20.04\delta_0^{-1} + 8.577 \delta_0^{-2}}(n\lVert \varphi\rVert^3) \big). 
\end{align*}
\end{coro}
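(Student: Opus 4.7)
The strategy is to split this corollary into two independent pieces. The first formula for $\op{L}_\varphi^n \one$ reduces a power of the operator to the scalar $\lambda_\varphi^n$ by exploiting the spectral decomposition $\op{L}_\varphi^n = \lambda_\varphi^n \op{P}_\varphi + \op{R}_\varphi^n$ (valid under \eqref{eq:A1} thanks to Theorem \ref{theo:perturbation-gap}). The remaining two formulas for $\lambda_\varphi^n$ are purely scalar: they come from taking a logarithm of the already-established Taylor expansion of $\lambda_\varphi$ in Lemma \ref{lemm:lambda}, then multiplying by $n$ and exponentiating.

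For the first formula, I would apply the decomposition to $\one$ and factor out $\lambda_\varphi^n$, writing
\[\op{L}_\varphi^n \one = \lambda_\varphi^n\Bigl(\op{P}_\varphi \one + (\lambda_\varphi^{-1}\op{R}_\varphi)^n\one\Bigr).\]
Lemma \ref{lemm:RP} directly gives $\op{P}_\varphi\one = \one + O_{3.77 + 4.08\delta_0^{-1}}(\lVert\varphi\rVert)$ and $\lVert(\lambda_\varphi^{-1}\op{R}_\varphi)^n\one\rVert \le (6.388 + 4.08\delta_0^{-1})(1-\delta_0/13)^{n-1}\lVert\varphi\rVert$. Condition \eqref{eq:bound-n0} is precisely what is needed so that $(1-\delta_0/13)^{n-1} \le 1/100$, reducing the second contribution to $(0.064 + 0.041\delta_0^{-1})\lVert\varphi\rVert$. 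Summing the two constants, $3.77 + 0.064 \le 3.834$ and $4.08 + 0.041 \le 4.121$, which matches the target.

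For the second and third formulas, I set $u := \lambda_\varphi - 1$ and use the Taylor expansion $\log(1+u) = u - u^2/2 + O(u^3)$. Lemma \ref{lemm:lambda} gives $|u| \le 0.0524$, which makes the log series converge rapidly with explicit remainder; in particular, $|\log(1+u)-u| \le u^2/(2(1-|u|))$ and $|\log(1+u)-u+u^2/2| \le |u|^3/(3(1-|u|))$. Combining $u = \mu_0(\varphi) + O_{2.43+2.081\delta_0^{-1}}(\lVert\varphi\rVert^2)$ with the first log bound yields $\log\lambda_\varphi = \mu_0(\varphi) + O_{3.36+2.081\delta_0^{-1}}(\lVert\varphi\rVert^2)$, and multiplying by $n$ and exponentiating gives the second formula. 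For the third, substitute the second-order expansion of $u$ from Lemma \ref{lemm:lambda}: when computing $u^2/2$ modulo $O(\lVert\varphi\rVert^3)$ only $\mu_0(\varphi)^2/2$ survives, so
\[\log\lambda_\varphi = \mu_0(\varphi) + \tfrac12 \mu_0(\varphi^2) - \tfrac12 \mu_0(\varphi)^2 + \sum_{k\ge 1}\mu_0(\varphi\op{L}_0^k\bar\varphi) + O(\lVert\varphi\rVert^3),\]
and the four middle terms assemble exactly into $\tfrac12 \sigma^2(\varphi)$ by the definition of the dynamical variance.

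The main obstacle is purely bookkeeping: carefully propagating the explicit constants through the log expansion and through the $O(u^3)$ remainder (which itself must be bounded using all three terms of the bound on $u$, each raised to appropriate powers) to arrive at the announced $10.89 + 20.04\delta_0^{-1} + 8.577\delta_0^{-2}$. No new analytic idea is required beyond Lemmas \ref{lemm:estimates}--\ref{lemm:RP}; the smallness guarantee $|u|\le 0.0524$ keeps every geometric-series remainder tame, so the whole argument reduces to tracking inequalities.
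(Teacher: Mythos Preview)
Your proposal is correct and follows essentially the same route as the paper: the spectral decomposition plus Lemma \ref{lemm:RP} for the first formula (with \eqref{eq:bound-n0} yielding $(1-\delta_0/13)^{n-1}\le 1/100$), and the substitution of Lemma \ref{lemm:lambda} into a Taylor expansion of $\log(1+u)$ for the last two. The only cosmetic difference is in the remainder bounds for $\log(1+u)$: the paper uses a convexity argument to get $\lvert\log(1+u)-u\rvert\le 0.52\,u^2$ at first order and a third-derivative bound $O_{0.392}(u^3)$ at second order, whereas your geometric-series bounds $u^2/(2(1-|u|))$ and $|u|^3/(3(1-|u|))$ give the very slightly looser constants $0.528$ and $0.352$; the first of these would produce $3.37$ rather than $3.36$, so to match the stated constant exactly you would need the paper's sharper convexity estimate, but this is pure bookkeeping and does not affect the argument.
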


\begin{proof}
Assuming \eqref{eq:A1}, Lemma \ref{lemm:RP} yields $\op{L}_{\varphi}^n\one = \lambda_{\varphi}^n \op{P}_{\varphi}\one+\op{R}_{\varphi}^n\one = \lambda_{\varphi}^n A$ where
\begin{equation}
A := \one+O_{3.77+4.08\delta_0^{-1}}(\lVert \varphi\rVert) + O_{6.388+4.08\delta_0^{-1}}\big(\big(1-\frac{\delta_0}{13}\big)^{n-1}\lVert \varphi \rVert\big) \label{eq:lambda^n}
\end{equation}
is easily controlled if we ask \eqref{eq:bound-n0}, under which we have
\[A  =1+O_{3.77+4.08\delta_0^{-1}}(\lVert \varphi\rVert) + O_{0.064+0.041\delta_0^{-1}}(\lVert \varphi \rVert)
  = 1+O_{3.834+4.121\delta_0^{-1}}(\lVert\varphi\rVert).\]

The first estimate for $\lambda_\varphi^n$ is obtained through the first-order Taylor formula. We use the monotony and convexity of $x\mapsto(\log(1+x)-x)/x$ and set
$x =\lambda_\varphi-1 \in [-b,b]$ with $b=0.0524$ to evaluate $\log(\lambda_\varphi)$:
\begin{align*}
\Big\lvert \frac{\log(1+x)-x}{x} \Big| &\le \frac{\log(1-b)+b}{-b} \cdot \frac{\lvert x\rvert}{b} \le 0.52 \lvert x\rvert \\
\log(\lambda_\varphi) &=  \lambda_\varphi-1+O_{0.52}(\lvert \lambda_\varphi-1\rvert^2)=  \lambda_\varphi-1+O_{0.52\times 1.334^2}(\lVert \varphi\rVert^2) \\
  &=  \lambda_\varphi-1+O_{0.926}(\lVert \varphi\rVert^2).
\end{align*}
and then using $\lambda_{\varphi} = 1+ \mu_0(\varphi) + O_{2.43+2.081 \delta_0^{-1}}(\lVert \varphi\rVert^2)$ from Lemma \ref{lemm:lambda}:
\begin{align*}
\lambda_{\varphi}^n
  &= \exp\big(n\log(\lambda_{\varphi})\big) = \exp\big(n(\lambda_{\varphi}-1)+O_{0.926}(n\lVert\varphi\rVert^2)\big)\\
  &= \exp\big(n \mu_0(\varphi) +O_{3.36+2.081 \delta_0^{-1}}(n\lVert\varphi\rVert^2)\big).
\end{align*}

The second estimate for $\lambda_\varphi^n$ is obtained, of course, from the second-order formula given in Lemma \ref{lemm:lambda}:
\[\lambda_{\varphi}  = 1 + \mu_0(\varphi)+ \frac12\mu_0(\varphi^2) + \sum_{k\ge 1}\mu_0( \varphi \op{L}_0^k (\bar\varphi)) + O_{7.41+17.75\delta_0^{-1}+8.49\delta_0^{-2}}(\lVert\varphi\rVert^3).\]
Here, it is somewhat tedious to use a convexity argument and we instead use the slightly less precise Taylor formula: for $x\in [-b,b]$ (where again $b=0.0524$) we have 
\[\Big\lvert\frac16\frac{\dd^3}{\dd x^3}\log(1+x)\Big\rvert\le \frac{2}{6(1-0.0524)^3}\le 0.392\]
so that 
\[\log(1+x) = x-\frac{1}{2}x^2 + O_{0.392}(x^3) \]
and therefore (using at one step $\lvert \mu_0(\varphi)\rvert\le \lVert\varphi\rVert$):
\begin{align*}
\log(\lambda_\varphi) &= (\lambda_\varphi-1) - \frac12(\lambda_\varphi-1)^2 + O_{0.392}((\lambda_\varphi-1)^3) \\
  &= \mu_0(\varphi) + \frac12\mu_0(\varphi^2) + \sum_{k\ge 1} \mu_0(\varphi \op{L}_0^k \bar\varphi) + O_{7.41+17.75\delta_0^{-1}+8.49\delta_0^{-2}}(\lVert\varphi\rVert^3) \\
  &\qquad -\frac12\big(\mu_0(\varphi)+O_{2.43+2.081 \delta_0^{-1}}(\lVert \varphi\rVert^2)\big)^2+O_{0.392\times 1.334^3}(\lVert \varphi\rVert^3)\\
  &= \mu_0(\varphi) + \frac12 \sigma^2(\varphi) + O_{10.771+19.831\delta_0^{-1}+8.49\delta_0^{-2}}(\lVert \varphi\rVert^3) \\
  & \qquad +O_{2.953+5.06\delta_0^{-1}+2.166\delta_0^{-2}}(\lVert \varphi\rVert^4).
\end{align*}
Now assumption \eqref{eq:A1} ensures $\lVert\varphi\rVert\le 0.04$, so that we can combine the two error terms into $O_{c}(\lVert \varphi\rVert^3)$ with
$c = 10.771+19.831\delta_0^{-1}+8.49\delta_0^{-2} + 0.04 (2.953+5.06\delta_0^{-1}+2.166\delta_0^{-2})
 \le  10.89+ 20.04\delta_0^{-1} + 8.577 \delta_0^{-2}$.
\end{proof}

\section{Concentration inequalities}
\label{sec:concentration}

We will in this section apply Corollary \ref{coro:L} to $\frac{t}{n}\varphi$ instead of $\varphi$, which we can do as soon as  $n$ is large enough with respect to $t$ and $\lVert\varphi\rVert$ in the sense that
\begin{equation}
n\ge \frac{\lVert t\varphi\rVert}{\log\Big(1+\frac{\delta_0^2}{12+13\delta_0}\Big)} \quad\mbox{and}\quad n\ge 1+\frac{\log 100}{-\log(1-\delta_0/13)}.
\label{eq:small-t}
\end{equation}
(These conditions can be replaced by the stronger but simpler conditions $n\ge 26\frac{\lVert t\varphi\rVert}{\delta_0^2}$ and $n \ge \frac{60}{\delta_0}$, respectively.)

Under conditions \eqref{eq:small-t}, we obtain our first control of the moment generating function of the empiric mean 
$\hat\mu_n(\varphi) := \frac{1}{n}\varphi(X_1)+\dots+\frac{1}{n}\varphi(X_n)$
by plugging the first-order estimate of Corollary \ref{coro:L} in \eqref{eq:mgf}:
\begin{align*}
\frac{\esp_\mu\big[\exp(t\hat\mu_n(\varphi))\big]}{\exp(t \mu_0(\varphi))} &= e^{-t\mu_0(\varphi)}\int \op{L}_{\frac{t}{n}\varphi}^n\one(x) \dd\mu(x) \\
  &= \big(1+O_{3.834+4.121\delta_0^{-1}}(\frac{t}{n}\lVert\varphi\rVert)\big)\exp(O_{3.36+2.081 \delta_0^{-1}}(\frac{t^2}{n}\lVert\varphi\rVert^2)).
\end{align*}

By the classical Chernov bound, it follows that for all $a,t>0$:
\begin{multline}
\pr_\mu\big[\lvert \hat\mu_n(\varphi)-\mu_0(\varphi)\rvert\ge a\big] \\ 
  \le  \big(2+(7.668+8.242\delta_0^{-1})\frac{t}{n}\lVert\varphi\rVert\big) \exp\big(-at+(3.36+2.081 \delta_0^{-1})\frac{t^2}{n}\lVert\varphi\rVert^2)\big). \label{eq:Chernov}
\end{multline}

\subsection{Gaussian regime}

Our first concentration inequality is obtained by choosing $t$ to optimize the argument of the exponential in \eqref{eq:Chernov}, i.e. taking
\[t=\frac{na}{2(3.36+2.081\delta_0^{-1})\lVert\varphi\rVert^2}.\]
This choice can be made as soon as $a$ is small enough: indeed the first condition on $n$ then reads
\[a\le (6.72+4.162\delta_0^{-1})\log\Big(1+\frac{\delta_0^2}{12+13\delta_0}\Big)\lVert \varphi\rVert =: a_{\text{max}} \lVert \varphi\rVert.\]
Let us find a simpler lower bound for the right-hand side:
\[a_{\text{max}} \ge (6.72+4.162\delta_0^{-1})\cdot 0.98 \frac{\delta_0^2}{12+13\delta_0} \ge \frac{6.58 \delta_0+4}{13\delta_0+12}\delta_0 
  \ge \frac{\delta_0}{3}
\]
so that a sufficient condition to make the above choice for $t$ is
\begin{equation}
a\le \frac{\delta_0\lVert \varphi\rVert}{3}.\label{eq:a}
\end{equation}
Then the argument in the exponential becomes
\begin{equation*}
-at+(3.36+2.081 \delta_0^{-1})\frac{t^2}{n}\lVert\varphi\rVert^2
  \le -\frac{n a^2}{(13.44+8.324\delta_0^{-1})\lVert\varphi\rVert^2}
\end{equation*}
and the constant in front:
\begin{align*}
2+(7.668+8.242\delta_0^{-1})\frac{t}{n}\lVert\varphi\rVert
  &\le 2+\frac{(7.668+8.242\delta_0^{-1})a}{(6.72+4.162\delta_0^{-1})\lVert\varphi\rVert}\\
  &\le 2+ \frac{7.668\delta_0^2+8.242\delta_0}{20.16\delta_0+12.486}\\
  &\le 2+\frac{7.668+8.242}{20.16+12.486} \le 2.488,
\end{align*}
which is the first part of Theorem \ref{theo:main-conc}
(one can also bound the front constant in a different way to show it can be taken close to $2$ for small $a$).

\subsection{Exponential regime}

For larger $a$, we obtain a result with exponential decay by taking $t$ as large as allowed by the first smallness condition \eqref{eq:small-t}, i.e. $t\simeq \frac{n}{\lVert\varphi\rVert} \log\Big(1+\frac{\delta_0^2}{12+13\delta_0}\Big)$.
To simplify, we precisely take the slightly smaller
\[t=\frac{n}{\lVert\varphi\rVert} \cdot \frac{0.98\delta_0^2}{12+13\delta_0}. \]
Then the argument in the exponential becomes
\begin{multline*}
-at+(3.36+2.081 \delta_0^{-1})\frac{t^2}{n}\lVert\varphi\rVert^2) \\
  = n\frac{0.98 \delta_0^2}{12+13\delta_0}\Big(-\frac{a}{\lVert\varphi\rVert}+\frac{0.98(3.36\delta_0^2+2.081\delta_0)}{12+13\delta_0}\Big) \\
  \le -n\frac{0.98 \delta_0^2}{12+13\delta_0}\Big(\frac{a}{\lVert\varphi\rVert}-0.254\delta_0\Big)
\end{multline*}
and the constant in front:
\begin{align*}
2+(7.668+8.242\delta_0^{-1})\frac{t}{n}\lVert\varphi\rVert
  &= 2+(7.668+8.242\delta_0^{-1})\frac{0.98\delta_0^2}{12+13\delta_0}\\
  &= 2+ \frac{7.515\delta_0^2+8.078\delta_0}{12+13\delta_0}\\
  &\le 2+\frac{15.593}{25} \le 2.624
\end{align*}
and we obtain the second part of Theorem \ref{theo:main-conc}.

\subsection{Second-order concentration}\label{sec:second-order}

In the case one has a good upper bound for the dynamical variance $\sigma^2(\varphi)$
then the previous concentration results can be improved by using the second-order formula in Corollary \ref{coro:L}, which yields
\begin{multline*}
\frac{\esp_\mu\big[\exp(t\hat\mu_n(\varphi))\big]}{\exp(t \mu_0(\varphi))} = \exp\Big(\frac{t^2}{2n}\sigma^2(\varphi)+ O_{10.89 + 20.04\delta_0^{-1} + 8.577 \delta_0^{-2}}\big(\frac{t^3}{n^2}\lVert \varphi\rVert^3\big)\Big) \\ \times \big(1+O_{3.834+4.121\delta_0^{-1}}\big(\frac{t}{n}\lVert\varphi\rVert\big)\big)
\end{multline*}
so that, if we know $\sigma^2(\varphi)\le U$:
\begin{multline*}
\pr_\mu\big[\lvert \hat\mu_n(\varphi)-\mu_0(\varphi)\rvert\ge a\big]
  \le  \Big(2+\frac{(7.668+8.242\delta_0^{-1})t}{n}\lVert\varphi\rVert\Big) \\ \times \exp\big(-at+\frac{t^2}{2n}U+ C\frac{t^3}{n^2} \lVert \varphi\rVert^3\big) 
\end{multline*}
where $C$ can be any number above $10.89 + 20.04\delta_0^{-1} + 8.577 \delta_0^{-2}$. To get a compact expression, we observe that $0.89+0.04\delta_0^{-1}\le 0.93\delta_0^{-2}$  so that
\[10.89 + 20.04\delta_0^{-1} + 8.577 \delta_0^{-2} \le 10+20\delta_0^{-1} + 9.507 \delta_0^{-2} \le 10(1+\delta_0^{-1})^2 =:C.\]

The choice of $t$ can then be adapted to the circumstances; we will only explore the choice $t=an/U$ which is nearly optimal when $a$ is small.

This choice can be made as soon as
\begin{equation*}
a \le \frac{U}{\lVert\varphi\rVert} \log\Big(1+\frac{\delta_0^2}{12+13\delta_0}\Big)
\end{equation*}
and entails the following upper bound for the front constant:
\[2+(7.668+8.242\delta_0^{-1})\frac{\delta_0^2}{12+13\delta_0} \le 2+\frac{7.668+8.242}{12+13} \le 2.637.\]
Meanwhile, the exponent becomes
\begin{align*}
-at+\frac{t^2}{2n}U+C\frac{t^3}{n^2}\lVert\varphi\rVert^3
  &= -\frac{a^2n}{2U}+\frac{C\lVert\varphi\rVert^3a^3n}{U^3}
\end{align*}
yielding Theorem \ref{theo:main-second}.

\section{Berry-Esseen bounds}\label{sec:BE}

In this section, we use the second-order Taylor formula for the leading eigenvalue to prove effective Berry-Esseen bounds. The method we use is the one proposed by Feller \cite{Feller}, which does not yield the best constant in the IID case, but is quite easily adapted to the Markov or dynamical case as observed in  \cite{Coelho-Parry}.

The starting point is a ``smoothing'' argument that allows to translate the proximity of characteristic functions into a proximity of distribution functions.

\begin{prop}[\cite{Feller}] \label{prop:Feller}
Let $F,G$ be the distribution functions and $\phi,\gamma$ be the characteristic functions of real random variables with vanishing expectation. Assume $G$ is differentiable and $\lVert G'\rVert_\infty \le m$; then for all $T>0$:
\[\lVert F-G\rVert_\infty \le \frac{1}{\pi} \int_{-T}^T \Big\lvert \frac{\phi(t)-\gamma(t)}{t} \Big\lvert \dd t + \frac{24 m}{\pi T}.\]
\end{prop}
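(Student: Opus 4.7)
The plan is to prove this by the classical Fourier-analytic smoothing argument of Feller. Write $\Delta := F - G$ and $\eta := \lVert \Delta \rVert_\infty$; after possibly reflecting ($F,G \leftrightarrow 1-F(-\cdot),\, 1-G(-\cdot)$) we may assume some $x_0$ satisfies $\Delta(x_0) \ge \eta - \varepsilon$ for arbitrarily small $\varepsilon > 0$. The strategy is to convolve $\Delta$ with a well-chosen probability density $V_T$ whose characteristic function $v_T$ is compactly supported in $[-T,T]$; monotonicity of $F$ together with the Lipschitz control $\lvert G'\rvert \le m$ forces $\Delta \ast V_T$ to be not too small near $x_0$, while Fourier inversion bounds $\Delta \ast V_T$ in terms of $\phi - \gamma$ on $[-T,T]$.

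First I would take $V_T$ to be the Fejér kernel $V_T(x) = (T/2\pi)\bigl(\sin(Tx/2)/(Tx/2)\bigr)^{2}$, whose characteristic function is the triangle function $v_T(t) = \max(1 - \lvert t \rvert/T, 0)$. Integrating by parts (using that $\Delta$ vanishes at $\pm\infty$ and that both variables have zero expectation, so $(\phi-\gamma)(t)/t$ is integrable near $0$), Fourier inversion gives, for every $y$,
\[ (\Delta \ast V_T)(y) = \frac{1}{2\pi}\int_{-T}^{T} \frac{\phi(t) - \gamma(t)}{-it}\, e^{-ity}\, v_T(t)\, \dd t, \]
hence the universal upper bound $\lVert \Delta \ast V_T\rVert_\infty \le \frac{1}{\pi}\int_{-T}^{T} \lvert (\phi(t) - \gamma(t))/t\rvert\, \dd t$, which is the first term in the claimed inequality.

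For the matching lower bound on $\lVert \Delta \ast V_T\rVert_\infty$, observe that for $x \ge x_0$, monotonicity of $F$ and $m$-Lipschitzness of $G$ yield $\Delta(x) \ge \Delta(x_0) - m(x-x_0) \ge \eta - \varepsilon - m(x-x_0)$, so $\Delta$ dominates a tent of height $\eta$ and base $\sim \eta/m$ sitting to the right of $x_0$, while $\Delta \ge -\eta$ everywhere. Choose the evaluation point $y = x_0 + \eta/(2m)$, split the convolution into the region $\lvert x - y\rvert \le \eta/(2m)$ (where the tent bound applies) and its complement (where only $-\eta$ is available), and estimate both pieces. Exploiting that $V_T$ is a symmetric probability density with $\int_{\lvert x\rvert > \eta/(2m)} V_T(x)\,\dd x \le C m/(T\eta)$ (using the $1/x^{2}$ decay of the Fejér kernel), one obtains a lower bound of the shape $(\Delta \ast V_T)(y) \ge \tfrac{1}{2}\eta - C'\, m/T$.

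Comparing the two bounds gives $\eta \le \frac{2}{\pi}\int_{-T}^T \lvert(\phi-\gamma)/t\rvert\,\dd t + 2C' m/T$, and a careful optimization of the split point and integration of $V_T$ against the tent shape yields the specific constant $24/\pi$ in the second term (the $1/\pi$ in the first is gained by the symmetry $\lvert\phi(-t)-\gamma(-t)\rvert = \lvert\phi(t)-\gamma(t)\rvert$). The main technical obstacle is precisely this last quantitative extraction of $\eta$: one must pick the shift so that the positive mass of $V_T$ weighted against the tent bound on $\Delta$ strictly dominates the tail contribution where $\Delta$ is only known to be $\ge -\eta$. The rest is bookkeeping of elementary Fourier identities and moment integrals of the Fejér kernel.
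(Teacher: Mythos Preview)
The paper does not supply its own proof of this proposition: it is stated with a citation to Feller and used as a black box. Your sketch is precisely the classical smoothing argument from Feller's text (Vol.~II, XVI.3), so there is nothing to compare against in the paper itself; your approach \emph{is} the cited one.

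A couple of minor remarks on the sketch. First, the reflection step should be phrased a bit more carefully: what you need is that the near-supremum of $\lvert\Delta\rvert$ is attained as a positive value of $\Delta$ (or of $-\Delta$), and the reflection $x\mapsto -x$ together with swapping $F\leftrightarrow G$ handles the sign; you have this in spirit but the parenthetical ``$F,G\leftrightarrow 1-F(-\cdot),\,1-G(-\cdot)$'' is not quite the right involution. Second, the tail estimate you write, $\int_{\lvert x\rvert>\eta/(2m)}V_T(x)\,\dd x\le C m/(T\eta)$, is correct but the constant $24/\pi$ comes from a sharper accounting: Feller uses the explicit value $\int_{\lvert u\rvert>a} (\sin u/u)^2\,\dd u \le 2/a$ together with the first-moment bound $\int \lvert u\rvert V_1(u)\,\dd u$, and the shift is taken at $y=x_0+c\eta/m$ with $c$ optimized. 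Your outline is correct; only the bookkeeping to land exactly on $24/\pi$ is missing.
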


We set $G(T)=(2\pi)^{-\frac12}\int_{-\infty}^T e^{-\frac{t^2}{2}} \dd t$ the reduced normal distribution function (so that $\lVert G'\rVert_\infty =(2\pi)^{-\frac12}$) and $\gamma(t)=e^{-\frac{t^2}{2}}$, and apply the above estimate to the distribution function $F_n$ of the random variable $Y_n = \frac{1}{\sqrt{n}}(\tilde\varphi(X_1)+\dots+\tilde\varphi(X_n))$, where here $\tilde\varphi$ is the fully normalized version of $\varphi$:
\[\tilde\varphi = \frac{\varphi-\mu_0(\varphi)}{\sigma(\varphi)} \qquad\mbox{where } \sigma^2(\varphi)=\mu_0(\varphi^2) - (\mu_0 \varphi)^2 + 2\sum_{k\ge 1} \mu_0\big(\varphi  \op{L}_0^k (\bar\varphi)\big),\]
assuming $\sigma^2(\varphi)> 0$ and with $\bar\varphi := \varphi-\mu_0(\varphi)$. The point is then to use the spectral method to obtain an expression of the characteristic function $\phi_n$ of $Y_n$ close to the expression of $\gamma$.

We start by showing that the norm of a normalized potential is bounded away from zero.
\begin{lemm}\label{lemm:away0}
We have $\lVert \tilde \varphi\rVert \ge \sqrt{\delta_0/2}$.
\end{lemm}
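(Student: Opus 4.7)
The plan is to reduce the statement to an upper bound on the dynamical variance $\sigma^2(\varphi)$ in terms of $\lVert \bar\varphi\rVert^2$, and obtain that bound by exploiting the contraction of $\op{L}_0$ on $G_0 = \ker \mu_0$. Since $\tilde\varphi = \bar\varphi/\sigma(\varphi)$, the inequality $\lVert\tilde\varphi\rVert \ge \sqrt{\delta_0/2}$ is equivalent to
\[
\sigma^2(\varphi) \le \frac{2}{\delta_0}\,\lVert \bar\varphi\rVert^2 .
\]

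First I would rewrite the series appearing in $\sigma^2(\varphi)$ in a form that only involves $\bar\varphi$. Since $\mu_0 \circ \op{L}_0 = \mu_0$ and $\mu_0(\bar\varphi)=0$, we have $\mu_0(\op{L}_0^k\bar\varphi)=0$ for every $k\ge 1$, and therefore replacing $\varphi$ by $\bar\varphi$ in the factor on the left costs nothing:
\[
\sigma^2(\varphi) = \mu_0(\bar\varphi^{\,2}) + 2\sum_{k\ge 1} \mu_0\bigl(\bar\varphi\, \op{L}_0^k\bar\varphi\bigr).
\]

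Next I would bound each summand. By Hypothesis \ref{hypo:X} (i), $|\mu_0(\psi)| \le \lVert \psi\rVert_\infty \le \lVert \psi\rVert$ for any $\psi\in\fspace{X}$, and by the Banach-algebra property, $\lVert \bar\varphi\,\op{L}_0^k\bar\varphi\rVert \le \lVert \bar\varphi\rVert\, \lVert \op{L}_0^k\bar\varphi\rVert$. Since $\bar\varphi \in G_0$, the contraction hypothesis (Hypothesis \ref{hypo:L} (ii)) gives $\lVert \op{L}_0^k\bar\varphi\rVert \le (1-\delta_0)^k\lVert \bar\varphi\rVert$ by induction. Thus
\[
|\mu_0(\bar\varphi\, \op{L}_0^k\bar\varphi)| \le (1-\delta_0)^k\lVert \bar\varphi\rVert^2,\qquad |\mu_0(\bar\varphi^{\,2})| \le \lVert \bar\varphi\rVert^2.
\]

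Finally I would sum the resulting geometric series:
\[
\sigma^2(\varphi) \le \lVert \bar\varphi\rVert^2\Bigl(1 + 2\sum_{k\ge 1}(1-\delta_0)^k\Bigr) = \lVert \bar\varphi\rVert^2 \cdot \frac{2-\delta_0}{\delta_0} \le \frac{2}{\delta_0}\lVert \bar\varphi\rVert^2,
\]
which rearranges to the claimed $\lVert \tilde\varphi\rVert \ge \sqrt{\delta_0/2}$. The only mildly subtle point is the first step (replacing $\varphi$ by $\bar\varphi$ in the series) and the verification that the iterated contraction bound on $G_0$ really holds termwise; everything else is a direct application of the Banach-algebra axiom and a geometric series.
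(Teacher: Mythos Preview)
Your proof is correct and follows essentially the same route as the paper: both reduce to bounding $\sigma^2(\varphi)$ by $\lVert\bar\varphi\rVert^2(2/\delta_0-1)$ via the contraction on $G_0$ and a geometric series, then invert. The only cosmetic difference is that the paper keeps one factor as $\lVert\bar\varphi\rVert_\infty$ in the intermediate estimate before passing to $\lVert\bar\varphi\rVert$, while you use the Banach-algebra inequality directly; the resulting bound is identical.
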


\begin{proof}
We have $\sigma^2(\varphi) =\sigma^2(\bar\varphi)
  \le \lVert \bar\varphi^2\rVert_\infty + 2\sum_{k\ge1}\lVert \bar \varphi\rVert_\infty (1-\delta_0)^k \lVert \bar \varphi\rVert
  \le \lVert \bar\varphi\rVert^2\big(\frac{2}{\delta_0}-1 \big).$
Using $\sigma^2(\tilde\varphi)=1$ we get $\lVert \tilde \varphi\rVert \ge \big(\frac{2}{\delta_0}-1 \big)^{-\frac12}$ and the result follows.
\end{proof}

This has a first interesting consequence: if assumption \eqref{eq:bound-n0} is not satisfied, we have in particular $n \le 60/\delta_0$ and Lemma \ref{lemm:away0} implies that in the conclusion of Theorem \ref{theo:main-BE} the right-hand side is (much) larger than $1$, making the conclusion vacuously true (the left-hand side is always less than $1$). It follows that we only need to consider the case when \eqref{eq:bound-n0} is satisfied even though we did not include it in the hypotheses. For the same reason, we can and do assume $n\ge 10\,000$.

To apply the estimates from Section \ref{sec:main} to $\frac{it}{\sqrt{n}}\tilde\varphi$, it is therefore sufficient to have
\begin{equation}
\sqrt{n}\ge \frac{\lVert t\tilde \varphi\rVert}{\log\Big(1+\frac{\delta_0^2}{13+12\delta_0} \Big)}.
\label{eq:assumptions-recall}
\end{equation}

\begin{lemm}\label{lemm:BE-estimates1}
Under assumption \eqref{eq:assumptions-recall} we have
\begin{align*}
\phi_n(t) &= \lambda_{\frac{it}{\sqrt{n}}\tilde\varphi}^n \Big(1+O_{3.668+4.121\delta_0^{-1}}(\lVert \frac{t}{\sqrt{n}} \tilde\varphi\rVert \Big) \\
\lambda_{\frac{it}{\sqrt{n}}\tilde\varphi}^n 
  &= \exp\Big(-\frac{t^2}{2}  + O_{10.89+20.04\delta_0^{-1}+8.577\delta_0^{-2}} (\frac{1}{\sqrt{n}}\lVert t \tilde\varphi\rVert^3)\Big).
\end{align*}
\end{lemm}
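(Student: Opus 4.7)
Both estimates are essentially immediate consequences of Corollary \ref{coro:L} applied to the complex element $\psi := \frac{it}{\sqrt{n}}\tilde\varphi \in \fspace{X}$, combined with a characteristic-function analogue of \eqref{eq:mgf}. I would first establish the representation $\phi_n(t) = \int \op{L}_\psi^n \one\, d\mu$: the induction that produces \eqref{eq:mgf} is purely algebraic, so the identity $\op{L}_\psi^n \one(x_0) = \esp_{x_0}[\exp(\psi(X_1)+\dots+\psi(X_n))]$ remains valid for complex $\psi\in\fspace{X}$, and integrating against $X_0\sim\mu$ yields the claim.

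Next I would verify the smallness assumption: condition \eqref{eq:assumptions-recall} is exactly $\lVert\psi\rVert\le\log\!\big(1+\delta_0^2/(13+12\delta_0)\big)$, i.e.\ \eqref{eq:A1} for $\psi$; together with the lower bound $n\ge 10\,000$ (standing by the discussion following Lemma \ref{lemm:away0}), this places $\psi$ in the regime where all results of Section \ref{sec:main} apply. For the first equality I would then invoke the zero-th order part of Corollary \ref{coro:L}, namely $\op{L}_\psi^n\one = \lambda_\psi^n\big(1 + O_{3.834+4.121\delta_0^{-1}}(\lVert\psi\rVert)\big)$, and integrate against $\mu$: since $\lVert\cdot\rVert$ dominates $\lVert\cdot\rVert_\infty$ by Hypothesis \ref{hypo:X}, any probability measure acts on $\fspace{X}$ as a continuous linear form of norm at most $1$, so the error bound passes through integration unchanged. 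The very slight tightening from $3.834$ to $3.668$ is obtained by redoing the bookkeeping of Corollary \ref{coro:L} in the Berry--Esseen regime: the exponentially-small $\op{R}_\psi^n\one$ contribution (weighted by $(1-\delta_0/13)^{n-1}$) is negligible here, so only the $\op{P}_\psi\one - \one$ term is retained with its direct constant.

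For the second equality I would apply the third-order part of Corollary \ref{coro:L},
\[\lambda_\psi^n = \exp\Big(n\mu_0(\psi) + \tfrac12 n\sigma^2(\psi) + O_{10.89+20.04\delta_0^{-1}+8.577\delta_0^{-2}}(n\lVert\psi\rVert^3)\Big),\]
and simplify using the defining properties of $\tilde\varphi$. Linearity of $\mu_0$ gives $n\mu_0(\psi) = it\sqrt{n}\,\mu_0(\tilde\varphi) = 0$. The defining formula for $\sigma^2$ is degree-two homogeneous in the potential (and this homogeneity extends to complex rescalings since the formula is polynomial), so $n\sigma^2(\psi) = n(it/\sqrt n)^2\sigma^2(\tilde\varphi) = -t^2$ using $\sigma^2(\tilde\varphi)=1$. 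The cubic remainder reduces to $n\cdot |t|^3/n^{3/2}\,\lVert\tilde\varphi\rVert^3 = \lVert t\tilde\varphi\rVert^3/\sqrt{n}$, which gives the stated estimate.

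The proof is essentially bookkeeping on top of Corollary \ref{coro:L}; the two points requiring care are (i) checking that the perturbation framework (Theorems \ref{theo:perturbation}, \ref{theo:perturbation-gap}, and thus Lemmas \ref{lemm:estimates}--\ref{lemm:RP} and Corollary \ref{coro:L}) extends verbatim to complex potentials, which it does since $\fspace{X}$ is allowed complex-valued and none of the arguments uses realness; and (ii) the numerical refinement of the first constant, which is routine provided one carefully controls the decaying remainder in the large-$n$ regime.
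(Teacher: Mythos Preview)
Your proposal is correct and follows essentially the same route as the paper: represent $\phi_n(t)$ via \eqref{eq:mgf} applied to the complex potential $\psi=\frac{it}{\sqrt n}\tilde\varphi$, decompose $\op{L}_\psi^n\one$ as $\lambda_\psi^n\op{P}_\psi\one+\op{R}_\psi^n\one$, and read off both estimates from Corollary \ref{coro:L} together with $\mu_0(\tilde\varphi)=0$ and $\sigma^2(\tilde\varphi)=1$. Your added remarks on the complex extension and on the passage of error bounds through integration against $\mu$ are appropriate; note however that your proposed mechanism for the constant $3.668$ (dropping the $\op{R}_\psi^n$ contribution) would yield $3.77$ from Lemma \ref{lemm:RP} rather than $3.668$, and the paper itself reverts to $3.834$ in the very next lemma, so this discrepancy is most likely a typo in the statement rather than a genuine refinement.
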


\begin{proof}
Applying formula \eqref{eq:mgf} to $\frac{it}{\sqrt{n}}\tilde\varphi$, we obtain the following expression for the characteristic function (where $\mu$ is the law of $X_0$):
\[\phi_n(t) = \int \op{L}_{\frac{it}{\sqrt{n}}\tilde\varphi}\one(x) \dd\mu(x) 
= \lambda_{\frac{it}{\sqrt{n}}\tilde\varphi}^n \underbrace{\Big( \int \op{P}_{\frac{it}{\sqrt{n}}\tilde\varphi}\one \dd\mu + \int \big[\op{R}/\lambda\big]_{\frac{it}{\sqrt{n}}\tilde\varphi}^n\one \dd\mu \Big)}_{=: A}\]

Corollary \ref{coro:L} gives the claimed expression for $\lambda_{\frac{it}{\sqrt{n}}\tilde\varphi}^n$ and
\[A=\int \op{P}_{\frac{it}{\sqrt{n}}\tilde\varphi}\one \dd\mu + \lambda_{\frac{it}{\sqrt{n}}\tilde\varphi}^{-n}\int \op{R}_{\frac{it}{\sqrt{n}}\tilde\varphi}^n\one \dd\mu = 1+O_{3.668+4.121\delta_0^{-1}}(\lVert \frac{t}{\sqrt{n}} \tilde\varphi\rVert).\]
\end{proof}

\begin{lemm}\label{lemm:majo-BE}
Under assumption \eqref{eq:assumptions-recall}, for any $\alpha\in(0,0.5)$, if
\begin{equation}
\sqrt{n} \ge \frac{10.89+20.04\delta_0^{-1}+8.577\delta_0^{-2}}{0.5-\alpha}  \lvert t\rvert \lVert\tilde \varphi\rVert^3
\label{eq:cond-BE}
\end{equation}
then
\begin{align}
\lvert \phi_n(t)-\gamma(t)\rvert  &\le 1.32 n e^{-0.9999\alpha t^2} \big\lvert \phi_n(t)^{\frac{1}{n}}-\gamma(t)^{\frac{1}{n}}\big\rvert.
\label{eq:majo-BE}
\end{align}
\end{lemm}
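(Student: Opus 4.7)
The approach is the elementary algebraic identity
\[ a^n - b^n = (a-b)\sum_{k=0}^{n-1} a^k b^{n-1-k}, \]
which yields $|a^n-b^n| \le n M^{n-1} |a-b|$ with $M := \max(|a|,|b|)$. Applied to $a = \phi_n(t)^{1/n}$ and $b = \gamma(t)^{1/n}$ (for any continuously chosen branch of the $n$-th roots, for instance the ones coming from $\phi_n(t) = \lambda_{\cdot}^n \cdot A$ of the previous lemma and $\gamma(t)^{1/n} = e^{-t^2/(2n)}$), this gives
\[ |\phi_n(t)-\gamma(t)| \le n M^{n-1}\, |\phi_n(t)^{1/n}-\gamma(t)^{1/n}|, \]
so that it suffices to establish $M^{n-1} \le 1.32\, e^{-0.9999\alpha t^2}$.

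The bound on $|\gamma(t)|^{(n-1)/n} = e^{-(n-1)t^2/(2n)}$ is immediate and, using $n \ge 10^4$ (permissible by the discussion following Lemma~\ref{lemm:away0}), is dominated by $e^{-0.9999\,t^2/2} \le e^{-0.9999\alpha t^2}$ since $\alpha<0.5$. For $|\phi_n(t)|^{(n-1)/n}$, I combine the two formulas of Lemma~\ref{lemm:BE-estimates1}:
\[ |\phi_n(t)| \le \exp\!\Big(-\tfrac{t^2}{2} + C_2 \tfrac{\lVert t\tilde\varphi\rVert^3}{\sqrt{n}}\Big)\Big(1+C_1 \tfrac{\lVert t\tilde\varphi\rVert}{\sqrt{n}}\Big), \]
with $C_1 = 3.668+4.121\delta_0^{-1}$ and $C_2 = 10.89+20.04\delta_0^{-1}+8.577\delta_0^{-2}$. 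Assumption~\eqref{eq:cond-BE} rewrites exactly as $C_2 \lVert t\tilde\varphi\rVert^3/\sqrt{n} \le (0.5-\alpha)\,t^2$ (after factoring $|t|^2$), so the exponential term is bounded by $e^{-\alpha t^2}$. Raising to the $(n-1)/n$-th power, using concavity $(1+x)^{(n-1)/n}\le 1+x$ and $(n-1)/n\ge 0.9999$, yields
\[ |\phi_n(t)|^{(n-1)/n} \le e^{-0.9999\alpha t^2}\Big(1+C_1\tfrac{\lVert t\tilde\varphi\rVert}{\sqrt{n}}\Big). \]

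To close, I plug in \eqref{eq:assumptions-recall}, which gives $\lVert t\tilde\varphi\rVert/\sqrt{n} \le \log(1+\delta_0^2/(13+12\delta_0))$, and check numerically that $C_1 \cdot \log(1+\delta_0^2/(13+12\delta_0)) \le 0.32$ for all $\delta_0\in(0,1]$ (the supremum is achieved near $\delta_0=1$, where the product is about $0.306$). Hence $M^{n-1} \le 1.32\, e^{-0.9999\alpha t^2}$, proving \eqref{eq:majo-BE}. The only delicate point is this last numerical verification together with keeping track of the two types of error (multiplicative factor $1+O(C_1/\sqrt{n})$ versus additive term $O(C_2/\sqrt n)$ in the exponent): the first is absorbed into the constant $1.32$, the second into the exponent through assumption~\eqref{eq:cond-BE}.
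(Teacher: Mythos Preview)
Your proof is correct and follows essentially the same approach as the paper: both use Feller's elementary inequality $|a^n-b^n|\le n\,c^{\,n-1}|a-b|$ with $a=\phi_n(t)^{1/n}$, $b=\gamma(t)^{1/n}$, bound the multiplicative factor $A$ by $1.32$ via assumption~\eqref{eq:assumptions-recall}, absorb the cubic error in the exponent via~\eqref{eq:cond-BE}, and use $n\ge 10^4$ to replace $(n-1)/n$ by $0.9999$. The only cosmetic difference is that the paper fixes a common upper bound $c=1.32^{1/n}e^{-\alpha t^2/n}$ from the outset, whereas you work with $M=\max(|a|,|b|)$ and bound each branch separately; the arithmetic is the same.
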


\begin{proof}
Following Feller \cite{Feller}, we use that
for all $a,b,c$ with
$\lvert a\rvert, \lvert b\rvert \le c$ and all $n\in\mathbb{N}$:
\begin{equation}
\lvert a^n-b^n \rvert \le n\lvert a-b\rvert c^{n-1}.
\label{eq:Feller}
\end{equation}
We take $a=\phi_n(t)^{\frac{1}{n}}$, $b=\gamma(t)^{\frac{1}{n}}$ and $c$ an upper bound which we will now choose.
Feller takes $c=e^{-\frac{t^2}{4n}}$, but we need two adaptations and take $c=1.32^{\frac1n}e^{-\alpha\frac{t^2}{n}}$ where $\alpha \in(0,0.5)$ will be optimized later on. We already have $\gamma(t)^{\frac{1}{n}} = e^{-\frac{t^2}{2n}}\le c$ and need to ensure the same bound for $\phi_n$.
We have
\[\phi_n(t)^{\frac{1}{n}} \le e^{-\frac{t^2}{2n}}\exp\big((10.89+20.04\delta_0^{-1}+8.577\delta_0^{-2})(\frac{1}{n^{3/2}}\lVert t \tilde\varphi\rVert^3)\big)A^{\frac1n}\]
where, using $\lVert \frac{t}{\sqrt{n}} \tilde\varphi\rVert \le \frac{\delta_0^2}{13+12\delta_0}$,
\[A \le 1+(3.834+4.121\delta_0^{-1}) \lVert \frac{t}{\sqrt{n}} \tilde\varphi\rVert \le 1.32.\]
To ensure $\phi_n(t)^{\frac1n}\le c$, it is therefore sufficient that
\[(10.89+20.04\delta_0^{-1}+8.577\delta_0^{-2})(\frac{1}{\sqrt{n}}\lVert t \tilde\varphi\rVert^3) \le (0.5-\alpha)t^2,\]
i.e. Condition \eqref{eq:cond-BE} suffices. 
Using $n\ge 10\,000$ to bound $(n-1)/n$ by $0.9999$ in \eqref{eq:Feller}, we then obtain \eqref{eq:majo-BE}.
\end{proof}

\begin{lemm}\label{lemm:majo-BE2}
Under assumption \eqref{eq:assumptions-recall} we have
\[\big\lvert \phi_n(t)^{\frac{1}{n}}-\gamma(t)^{\frac{1}{n}}\big\rvert
\le \frac{ f \lVert t \tilde\varphi\rVert^3 + g \lVert t\tilde\varphi\rVert}{n^{3/2}} +\frac{t^4}{8n^2}\]
with $f=7.41+17.75\delta_0^{-1}+8.49\delta_0^{-2}$ and $g=4.036+4.338\delta_0^{-1}$
\end{lemm}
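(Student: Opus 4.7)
The plan is to start from the identity $\phi_n(t)^{1/n}=\lambda\cdot A^{1/n}$ provided by Lemma \ref{lemm:BE-estimates1}, where $\lambda=\lambda_{\frac{it}{\sqrt{n}}\tilde\varphi}$ and $A=1+O_c(\tau/\sqrt{n})$ with $\tau:=\lVert t\tilde\varphi\rVert$ and $c=3.668+4.121\delta_0^{-1}$ (choosing the continuous branch of the $n$th root that equals $1$ at $t=0$). Since $\gamma(t)^{1/n}=e^{-t^2/(2n)}$, I would apply the triangle inequality in the form
\[
\bigl|\phi_n(t)^{1/n}-\gamma(t)^{1/n}\bigr|\le |\lambda|\cdot|A^{1/n}-1|+\bigl|\lambda-e^{-t^2/(2n)}\bigr|,
\]
and treat the two pieces separately. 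The first summand will contribute the $g\tau/n^{3/2}$ term, and the second will contribute $f\tau^3/n^{3/2}+t^4/(8n^2)$.

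For $|\lambda-e^{-t^2/(2n)}|$, I would apply the third-order Taylor formula of Lemma \ref{lemm:lambda} at $\varphi=\frac{it}{\sqrt{n}}\tilde\varphi$. Since $\mu_0(\tilde\varphi)=0$, the linear term vanishes, and using $\sigma^2(\tilde\varphi)=1$ the quadratic contribution $\frac{1}{2}\mu_0(\varphi^2)+\sum_{k\ge1}\mu_0(\varphi\op{L}_0^k\bar\varphi)$ collapses to exactly $-t^2/(2n)$; the cubic remainder then yields $|\lambda-(1-t^2/(2n))|\le f\tau^3/n^{3/2}$. A one-line Taylor estimate $|e^{-x}-(1-x)|\le x^2/2$ applied with $x=t^2/(2n)\ge 0$ gives $|(1-t^2/(2n))-e^{-t^2/(2n)}|\le t^4/(8n^2)$, and the triangle inequality combines these into the desired bound.

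For $|\lambda|\cdot|A^{1/n}-1|$, I would use $|\lambda|\le 1.0524$ from Lemma \ref{lemm:lambda} together with the generalized binomial expansion: writing $A=1+r$ with $|r|\le c\tau/\sqrt{n}$ and using $|\binom{1/n}{k}|\le 1/(nk)$ for $k\ge 1$,
\[
|A^{1/n}-1|\le \frac{|r|}{n}+\sum_{k\ge 2}\frac{|r|^k}{nk}\le \frac{|r|}{n}+\frac{|r|^2}{2n(1-|r|)}.
\]
The leading term is bounded by $c\tau/n^{3/2}$, and the higher-order correction can be absorbed into a slightly inflated first-order coefficient by using the uniform smallness $\tau/\sqrt{n}\le\log(1+\delta_0^2/(13+12\delta_0))$ from \eqref{eq:assumptions-recall}; multiplying by $|\lambda|\le 1.0524$ then produces the stated constant $g=4.036+4.338\delta_0^{-1}$.

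The main obstacle is purely arithmetic: matching the specific numerical constant $g$. The crude route $|\lambda|\cdot\frac{|r|}{n(1-|r|)}$ overshoots, so one has to separate the linear term $|\lambda||r|/n$ (whose coefficient already comes out as $1.0524\cdot c=3.861+4.338\delta_0^{-1}$, matching the $\delta_0^{-1}$ piece of $g$ exactly) from the higher-order remainder, and then exploit the smallness of $\tau/\sqrt{n}$ supplied by \eqref{eq:assumptions-recall} to bound that remainder by the remaining budget $0.175$ in the constant. Everything else is a direct chaining of earlier estimates.
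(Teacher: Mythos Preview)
Your decomposition is essentially the paper's. Writing $\phi_n(t)^{1/n}=\lambda A^{1/n}$, the paper pivots through $1-t^2/(2n)$ and bounds $\bigl|\lambda A^{1/n}-(1-t^2/(2n))\bigr|+\bigl|e^{-t^2/(2n)}-(1-t^2/(2n))\bigr|$; expanding the first summand gives exactly $\lambda(A^{1/n}-1)+(\lambda-1+t^2/(2n))$, which is your split. Both routes then feed in the second-order expansion of $\lambda$ from Lemma~\ref{lemm:lambda} (producing the $f$-term) and $|e^{-x}-1+x|\le x^2/2$ (producing $t^4/(8n^2)$), leaving only the control of $|A^{1/n}-1|$.

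The gap is in your final arithmetic for that piece. Assumption~\eqref{eq:assumptions-recall} alone gives $|r|\le c\,\tau/\sqrt{n}\le(3.668\,\delta_0^2+4.121\,\delta_0)/(13+12\delta_0)$, which at $\delta_0=1$ is about $0.31$. Then $|r|/(2(1-|r|))\approx 0.23$, and your quadratic remainder $|\lambda|\cdot|r|^2/\bigl(2n(1-|r|)\bigr)$ contributes roughly $1.0524\cdot c\cdot 0.23\approx 1.9$ to the coefficient of $\tau/n^{3/2}$ --- far more than the $0.175$ you budget. Note too that this remainder carries its own $\delta_0^{-1}$ piece (since $c$ does), so the ``exact match'' of the $\delta_0^{-1}$ coefficient from the linear term alone is illusory. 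The paper gets the tighter constant by a different device: it passes through $A^{1/n}\le\exp(c\tau/n^{3/2})$ and then invokes the standing hypothesis $n\ge 10\,000$ (set up at the start of Section~\ref{sec:BE}) to force that exponent below $10^{-4}$, whence $e^{\varepsilon}\le 1+1.00011\,\varepsilon$. Your binomial route uses only the smallness of $\tau/\sqrt{n}$, and by itself that is not enough to land on the stated $g$; you would need to bring in the extra factor $1/n$ with $n\ge 10\,000$ as the paper does.
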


\begin{proof}
We follow Feller again and write
\begin{align}
\big\lvert \phi_n(t)^{\frac{1}{n}}-\gamma(t)^{\frac{1}{n}}\big\rvert
   &\le \Big\lvert \lambda_{\frac{it}{\sqrt{n}}\tilde\varphi}A^{\frac1n} -1+\frac{t^2}{2n}\Big\rvert + \Big\lvert e^{-\frac{t^2}{2n}} -1+\frac{t^2}{2n}\Big\rvert.
   \label{eq:Feller2}
\end{align}
where $A$ is defined in the proof of Lemma \ref{lemm:BE-estimates1}.
Since for all $x\in [0,+\infty\mathclose[$ we have $0\le e^{-x}-1+x\le \frac12 x^2$, the second summand is bounded above by 
$\frac{t^4}{8n^2}$. To deal with the first summand we start by a finer evaluation of $A$:
\[
A^{\frac1n} = (1+O_{3.834+4.121\delta_0^{-1}}(\lVert \frac{t}{\sqrt{n}} \tilde\varphi\rVert))^{\frac{1}{n}} 
  \le \exp\big(\frac{1}{n^{3/2}}(3.834+4.121\delta_0^{-1}) \lVert t \tilde\varphi\rVert)\big).\]
By our assumptions the argument of the exponential is not greater than
\[
\frac{1}{n}(3.834+4.121\delta_0^{-1}) \log\Big(1+\frac{\delta_0^2}{13+12\delta_0}\Big)
  \le \frac{1}{10\,000}\frac{3.834 \delta_0^2+4.121\delta_0}{13+12\delta_0}\\
  \le 0.0001.
\]
Using $e^{0.0001}\le 1.00011$, for all $\varepsilon\in [0,0.0001]$ we have $\exp(\varepsilon)\le 1+ 1.00011\varepsilon$ so that:
\[
A^{\frac{1}{n}}
\le 1+ \frac{3.835+4.122\delta_0^{-1}}{n^{3/2}}\lVert t\tilde\varphi\rVert.
\]
Using Lemma \ref{lemm:lambda}, definition of $\sigma^2$ and normalization of $\tilde\varphi$, we have:
\[\lambda_{\frac{it}{\sqrt{n}}\tilde\varphi}=1 - \frac{t^2}{2n} + O_{7.41+17.75\delta_0^{-1}+8.49\delta_0^{-2}}\big(\lVert\frac{t}{\sqrt{n}}\tilde\varphi\rVert^3\big).\]
The lower order terms simplify in the first summand of \eqref{eq:Feller2} and we obtain
\begin{multline*}
\big\lvert \phi_n(t)^{\frac{1}{n}}-\gamma(t)^{\frac{1}{n}}\big\rvert \\
   \le \Big\lvert O_{7.41+17.75\delta_0^{-1}+8.49\delta_0^{-2}}(\lVert \frac{t}{\sqrt{n}} \tilde\varphi\rVert^3) + \lambda_{\frac{it}{\sqrt{n}}\tilde\varphi}\frac{3.835+4.122\delta_0^{-1}}{n^{3/2}}\lVert t\tilde\varphi\rVert \Big\rvert  +\frac{t^4}{8n^2} \\
   \le \frac{f\lVert t \tilde\varphi\rVert^3 + g\lVert t\tilde\varphi\rVert}{n^{3/2}}  +\frac{t^4}{8n^2}
\end{multline*}
(using $g\ge 1.0524(3.835+4.122\delta_0^{-1})$).
\end{proof}

For all $T>0$ such that the above conditions \eqref{eq:assumptions-recall} and \eqref{eq:cond-BE} hold for all $t\in[-T,T]$, we have by Proposition \ref{prop:Feller} and Lemmas \ref{lemm:majo-BE}, \ref{lemm:majo-BE2}:
\begin{align}
\lVert F_n-G\rVert_\infty &\le \frac{1}{\pi} \int_{-T}^T \Big\lvert \frac{\phi(t)-\gamma(t)}{t} \Big\lvert \dd t + \frac{24 m}{\pi T} \nonumber\\
  &\le \frac{2.64}{\pi}\int_0^T \frac{n}{t} e^{-0.9999\alpha t^2} \big\lvert \phi_n(t)^{\frac{1}{n}}-\gamma(t)^{\frac{1}{n}}\big\rvert \dd t + \frac{3.048}{T} \nonumber\\
  &\le \frac{2.64}{\pi\sqrt{n}}\int_0^\infty e^{-0.9999\alpha t^2} \big(f\lVert \tilde\varphi\rVert^3 t^2 + g \lVert \tilde\varphi\rVert+ht^3\big) \dd t + \frac{3.048}{T}
\nonumber
\end{align}
where $f$, $g$ are defined in Lemma \ref{lemm:majo-BE2} and, using $n\ge 10\,000$, $h = 0.00125$. We want to take $T$ as large as possible to lower the last term, but we need to ensure conditions \eqref{eq:assumptions-recall} and \eqref{eq:cond-BE}, i.e.:
\[T \le \frac{\sqrt{n}}{\lVert \tilde \varphi\rVert}\log\Big(1+\frac{\delta_0^2}{13+12\delta_0} \Big)  \quad\mbox{and}\quad  T \le \frac{\sqrt{n}}{\lVert\tilde\varphi\rVert^3} \frac{(0.5-\alpha)}{10.89+20.04\delta_0^{-1}+8.577\delta_0^{-2}}\]

We could use here the lower bound on $\lVert\tilde\varphi\rVert$ to replace the left condition by a condition of the same form as the right one, but this would be too strong when $\lVert\tilde\varphi\rVert$ is far from the bound. We will make a choice which will be better when $\lVert\tilde\varphi\rVert$ is of the order of $1$, by replacing the above conditions by the more stringent 
\[ T \le \frac{\sqrt{n}}{\max\{\lVert\tilde\varphi\rVert,\lVert\tilde\varphi\rVert^3\}} \min\Big\{\log\Big(1+\frac{\delta_0^2}{13+12\delta_0}\Big) , \frac{(0.5-\alpha)}{10.89+20.04\delta_0^{-1}+8.577\delta_0^{-2}} \Big\}.\]
In the $\min$, the first term is larger than $0.98\delta_0^2/(13+12\delta_0)$ which is easily seen to be larger than the second term for all $\delta_0$. We thus take
\[T = \frac{\sqrt{n}(0.5-\alpha)}{\max\{\lVert\tilde\varphi\rVert,\lVert\tilde\varphi\rVert^3\} \big(10.89+20.04\delta_0^{-1}+8.577\delta_0^{-2}\big)}\]
and we obtain
\begin{align*}
\lVert F_n-G\rVert_\infty &\le \frac{2.64}{\pi\sqrt{n}}\int_0^{+\infty} e^{-0.9999\alpha t^2} \big(f\lVert \tilde\varphi\rVert^3 t^2 + g \lVert \tilde\varphi\rVert  + ht^3\big) \dd t \\ &\qquad +\frac{(33.193+61.082\delta_0^{-1}+26.082\delta_0^{-2})\max\{\lVert\tilde\varphi\rVert,\lVert\tilde\varphi\rVert^3\}}{(0.5-\alpha)\sqrt{n}}.
\end{align*}
Setting $\alpha'=0.9999\alpha$, we have for each $d=0,2,3$: 
\[\int_0^{+\infty} e^{-\alpha' t^2} t^d \dd t = {\alpha'}^{-\frac{d+1}{2}} \int_0^{+\infty} e^{-t^2} t^d \dd t=\frac12 {\alpha'}^{-\frac{d+1}{2}} \Gamma\Big(\frac{d+1}{2}\Big)\]
and thus:
\begin{align*}
\lVert F_n-G\rVert_\infty &\le \frac{1.32}{\pi\sqrt{n}}\big( f{\alpha'}^{-\frac32}\frac{\sqrt{\pi}}{2} \lVert \tilde\varphi\rVert^3+g{\alpha'}^{-\frac12}\sqrt{\pi} \lVert \tilde\varphi\rVert+h{\alpha'}^{-2} \big)\\
&\qquad + \frac{(33.193+61.082\delta_0^{-1}+26.082\delta_0^{-2})\max\{\lVert\tilde\varphi\rVert,\lVert\tilde\varphi\rVert^3\}}{(0.5-\alpha)\sqrt{n}}.
\end{align*}

We will now choose $\alpha$, by comparing the two most troublesome coefficients in the small $\delta_0$ regime; these coefficients are $\frac{0.66 f}{\sqrt{\pi}(0.9999\alpha)^{3/2}}$, which is close to $3.162 \delta_0^{-2}\alpha^{-3/2}$ (making us want to take $\alpha$ large), and $\frac{(33.193+61.082\delta_0^{-1}+26.082\delta_0^{-2})}{0.5-\alpha}$ which is close to $26.082\delta_0^{-2}/(0.5-\alpha)$ (and makes us want to take $\alpha$ small). Optimizing the sum of these coefficients leads us to take $\alpha=0.195$. We then get
\begin{align*}
\lVert F_n-G\rVert_\infty &\le \frac{1}{\sqrt{n}}\Big( (32.05+76.77\delta_0^{-1}+36.72\delta_0^{-2} )\lVert \tilde\varphi\rVert^3+(6.81+7.32\delta_0^{-1}) \lVert \tilde\varphi\rVert\\
&\qquad +0.02 + (108.83+200.27\delta_0^{-1}+85.52\delta_0^{-2})\max\{\lVert\tilde\varphi\rVert,\lVert\tilde\varphi\rVert^3\}\Big) \\
  &\le \frac{1}{\sqrt{n}}\big(0.02 + (148+284.36\delta_0^{-1}+123 \delta_0^{-2}) \max\{\lVert\tilde\varphi\rVert,\lVert\tilde\varphi\rVert^3\} \big)
\end{align*}
which yields Theorem \ref{theo:main-BE} after using Lemma \ref{lemm:away0} to get $0.02 \le 0.03 \lVert\tilde\varphi\rVert\delta_0^{-1}$.

\bibliographystyle{amsalpha}
\bibliography{concentration}

\newcommand{\etalchar}[1]{$^{#1}$}
\providecommand{\bysame}{\leavevmode\hbox to3em{\hrulefill}\thinspace}
\providecommand{\MR}{\relax\ifhmode\unskip\space\fi MR }
\providecommand{\MRhref}[2]{%
  \href{http://www.ams.org/mathscinet-getitem?mr=#1}{#2}
}
\providecommand{\href}[2]{#2}
\begin{thebibliography}{GKLMF15}

\bibitem[Bal00]{Baladi}
Viviane Baladi, \emph{Positive transfer operators and decay of correlations},
  Advanced Series in Nonlinear Dynamics, vol.~16, World Scientific Publishing
  Co., Inc., River Edge, NJ, 2000. \MR{1793194 (2001k:37035)}

\bibitem[Bol82]{Bolthausen}
Erwin Bolthausen, \emph{The {B}erry-{E}sseen theorem for strongly mixing
  {H}arris recurrent {M}arkov chains}, Probability Theory and Related Fields
  \textbf{60} (1982), no.~3, 283--289.

\bibitem[BT08]{Bruin-Todd}
Henk Bruin and Mike Todd, \emph{Equilibrium states for interval maps:
  potentials with {$\sup\phi-\inf\phi<h_{\text{top}}(f)$}}, Comm. Math. Phys.
  \textbf{283} (2008), no.~3, 579--611. \MR{2434739}

\bibitem[CG12]{chazottes2012optimal}
Jean-Ren\'e Chazottes and S\'ebastien Gou\"ezel, \emph{Optimal concentration
  inequalities for dynamical systems}, Comm. Math. Phys. \textbf{316} (2012),
  no.~3, 843--889. \MR{2993935}

\bibitem[CP90]{Coelho-Parry}
Zaqueu Coelho and William Parry, \emph{Central limit asymptotics for shifts of
  finite type}, Israel J. Math. \textbf{69} (1990), no.~2, 235--249.
  \MR{1045376}

\bibitem[CS09]{Cyr-Sarig09}
Van Cyr and Omri Sarig, \emph{Spectral gap and transience for {R}uelle
  operators on countable {M}arkov shifts}, Comm. Math. Phys. \textbf{292}
  (2009), no.~3, 637--666. \MR{2551790}

\bibitem[CV13]{CV}
A.~Castro and P.~Varandas, \emph{Equilibrium states for non-uniformly expanding
  maps: decay of correlations and strong stability}, Ann. Inst. H. Poincar\'e
  Anal. Non Lin\'eaire \textbf{30} (2013), no.~2, 225--249. \MR{3035975}

\bibitem[DF15]{dedeker2015deviation}
J\'er\^ome Dedecker and Xiequan Fan, \emph{Deviation inequalities for
  separately {L}ipschitz functionals of iterated random functions}, Stochastic
  Process. Appl. \textbf{125} (2015), no.~1, 60--90. \MR{3274692}

\bibitem[DG15]{dedeker2015subgaussian}
J\'er\^ome Dedecker and S\'ebastien Gou\"ezel, \emph{Subgaussian concentration
  inequalities for geometrically ergodic {M}arkov chains}, Electron. Commun.
  Probab. \textbf{20} (2015), no. 64, 12. \MR{3407208}

\bibitem[Dub11]{Dubois}
Lo{\"\i}c Dubois, \emph{An explicit {B}erry-{E}ss{\'e}en bound for uniformly
  expanding maps on the interval}, Israel Journal of Mathematics \textbf{186}
  (2011), no.~1, 221--250.

\bibitem[Erd39]{Erdos1939}
Paul Erd{\"o}s, \emph{On a family of symmetric {B}ernoulli convolutions},
  American Journal of Mathematics \textbf{61} (1939), no.~4, 974--976.

\bibitem[Fel66]{Feller}
William Feller, \emph{An introduction to probability theory and its
  applications. {V}ol. {II}}, John Wiley \& Sons, Inc., New York-London-Sydney,
  1966. \MR{0210154}

\bibitem[GD12]{Gomez-Dartnell2012}
David~M G{\'o}mez and Pablo Dartnell, \emph{Simple monte carlo integration with
  respect to {B}ernoulli convolutions}, Applications of Mathematics \textbf{57}
  (2012), no.~6, 617--626.

\bibitem[GKLMF15]{GKLM}
Paolo Giulietti, Beno\^{\i}t~R. Kloeckner, Artur~O. Lopes, and Diego
  Marcon~Farias, \emph{The calculus of thermodynamical formalism},
  arXiv:1508.01297, to appear in \textit{J. Eur. Math. Soc.}, 2015.

\bibitem[GO02]{Glynn2002}
Peter~W Glynn and Dirk Ormoneit, \emph{{H}oeffding's inequality for uniformly
  ergodic {M}arkov chains}, Statistics \& probability letters \textbf{56}
  (2002), no.~2, 143--146.

\bibitem[Gou15]{gouezel2015limit}
S\'ebastien Gou\"ezel, \emph{Limit theorems in dynamical systems using the
  spectral method}, Hyperbolic dynamics, fluctuations and large deviations,
  Proc. Sympos. Pure Math., vol.~89, Amer. Math. Soc., Providence, RI, 2015,
  pp.~161--193. \MR{3309098}

\bibitem[HH01]{HH}
Hubert Hennion and Lo{\"{\i}}c Herv{\'e}, \emph{Limit theorems for {M}arkov
  chains and stochastic properties of dynamical systems by quasi-compactness},
  Lecture Notes in Mathematics, vol. 1766, Springer-Verlag, Berlin, 2001.

\bibitem[JO10]{JO}
Ald\'eric Joulin and Yann Ollivier, \emph{Curvature, concentration and error
  estimates for {M}arkov chain {M}onte {C}arlo}, Ann. Probab. \textbf{38}
  (2010), no.~6, 2418--2442. \MR{2683634}

\bibitem[KL99]{Keller-Liverani}
Gerhard Keller and Carlangelo Liverani, \emph{Stability of the spectrum for
  transfer operators}, Annali della Scuola Normale Superiore di Pisa-Classe di
  Scienze \textbf{28} (1999), no.~1, 141--152.

\bibitem[KLMM05]{Kontoyiannis-LMM}
Ioannis Kontoyiannis, Luis~A Lastras-Montano, and Sean~P Meyn, \emph{Relative
  entropy and exponential deviation bounds for general {M}arkov chains},
  International Symposium on Information Theory, 2005, IEEE, 2005,
  pp.~1563--1567.

\bibitem[Klo17a]{K:HT}
Beno\^{\i}t~R. Kloeckner, \emph{Effective high-temperature estimates for
  intermittent maps}, To appear in Ergodic Theory Dynam. Systems,
  arXiv:1704.00586, 2017.

\bibitem[Klo17b]{K:perturbation}
\bysame, \emph{Effective perturbation theory for linear operators},
  arXiv:1703.09425, 2017.

\bibitem[Klo17c]{K:WeaklyExp}
\bysame, \emph{An optimal transportation approach to the decay of correlations
  for non-uniformly expanding maps}, arXiv:1711.08052, 2017.

\bibitem[Klo18]{K:examples}
\bysame, \emph{Toy examples for effective concentration bounds}, 2018.

\bibitem[KM12]{Kontoyiannis-M}
Ioannis Kontoyiannis and Sean~P Meyn, \emph{Geometric ergodicity and the
  spectral gap of non-reversible {M}arkov chains}, Probability Theory and
  Related Fields (2012), 1--13.

\bibitem[Lez98]{Lezaud}
Pascal Lezaud, \emph{Chernoff-type bound for finite {M}arkov chains}, Ann.
  Appl. Probab. \textbf{8} (1998), no.~3, 849--867. \MR{1627795}

\bibitem[Lez01]{Lezaud2001}
\bysame, \emph{{C}hernoff and {B}erry--{E}ss{\'e}en inequalities for {M}arkov
  processes}, ESAIM: Probability and Statistics \textbf{5} (2001), 183--201.

\bibitem[Liv01]{liverani2001rigorous}
Carlangelo Liverani, \emph{Rigorous numerical investigation of the statistical
  properties of piecewise expanding maps. {A} feasibility study}, Nonlinearity
  \textbf{14} (2001), no.~3, 463--490. \MR{1830903}

\bibitem[Nag57]{Nagaev1}
S.~V. Nagaev, \emph{Some limit theorems for stationary {M}arkov chains}, Teor.
  Veroyatnost. i Primenen. \textbf{2} (1957), 389--416. \MR{0094846}

\bibitem[Nag61]{Nagaev2}
\bysame, \emph{More exact limit theorems for homogeneous {M}arkov chains},
  Teor. Verojatnost. i Primenen. \textbf{6} (1961), 67--86. \MR{0131291}

\bibitem[Pau15]{Paulin2015}
Daniel Paulin, \emph{Concentration inequalities for {M}arkov chains by {M}arton
  couplings and spectral methods}, Electronic Journal of Probability
  \textbf{20} (2015).

\bibitem[Pau16]{Paulin2016}
\bysame, \emph{Mixing and concentration by {R}icci curvature}, Journal of
  Functional Analysis \textbf{270} (2016), no.~5, 1623--1662.

\bibitem[PSS00]{Peres2000}
Yuval Peres, Wilhelm Schlag, and Boris Solomyak, \emph{Sixty years of
  {B}ernoulli convolutions}, Progress in probability (2000), 39--68.

\bibitem[RR{\etalchar{+}}04]{Roberts-Rosenthal}
Gareth~O Roberts, Jeffrey~S Rosenthal, et~al., \emph{General state space
  {M}arkov chains and mcmc algorithms}, Probability Surveys \textbf{1} (2004),
  20--71.

\bibitem[Rue04]{Ruelle}
David Ruelle, \emph{Thermodynamic formalism}, second ed., Cambridge
  Mathematical Library, Cambridge University Press, Cambridge, 2004, The
  mathematical structures of equilibrium statistical mechanics. \MR{2129258
  (2006a:82008)}

\bibitem[Sol95]{Solomyak1995}
Boris Solomyak, \emph{On the random series $\sum\pm\lambda^n$ (an {E}rd\"os
  problem)}, Annals of Mathematics (1995), 611--625.

\bibitem[Tyu11]{Tyurin}
I.~S. Tyurin, \emph{Improvement of the remainder in the {L}yapunov theorem},
  Teor. Veroyatn. Primen. \textbf{56} (2011), no.~4, 808--811. \MR{3137072}

\bibitem[WH17]{watanabe2017finite}
Shun Watanabe and Masahito Hayashi, \emph{Finite-length analysis on tail
  probability for {M}arkov chain and application to simple hypothesis testing},
  Ann. Appl. Probab. \textbf{27} (2017), no.~2, 811--845. \MR{3655854}

\end{thebibliography}
\end{document}